\newcommand{\supp}{\mathrm{supp}}
\newtheorem{theorem}{Theorem}[section]
\newtheorem{lemma}[theorem]{Lemma}
\theoremstyle{definition}
\newtheorem{definition}[theorem]{Definition}
\theoremstyle{remark}
\newtheorem{remark}[theorem]{Remark}
\numberwithin{equation}{section}
\title{On the convergence of type I Hermite-Pad\'e approximants for a class of meromorphic functions}
\date{\today}
\author{G. L\'{o}pez Lagomasino\footnotemark[2], S. Medina Peralta\footnotemark[2]}
\begin{document}

\maketitle
\renewcommand{\thefootnote}{\fnsymbol{footnote}}
\footnotetext[2]{Departamento de
Matem\'{a}ticas, Universidad Carlos III de Madrid, Avda. Universidad
30, 28911 Legan\'{e}s, Madrid, Spain.
Both authors were partially supported
by research grant MTM2012-36372-C03-01 of Ministerio de Econom\'{\i}a y Competitividad, Spain.}

\begin{abstract}
We study the convergence of type I Hermite-Pad\'e approximation for a class of meromorphic functions obtained by adding a vector of rational functions with real coefficients to a Nikishin system of functions.
\end{abstract}

\textbf{Keywords:} Multiple orthogonal polynomials, Nikishin systems,
type I Hermite-Pad\'{e} approximation.

\textbf{AMS classification:} Primary 30E10, 42C05; Secondary 41A20.

\maketitle

\section{Introduction}
\label{section:intro}

Let $s$ be a finite  Borel measure with constant (not neessarily positive) sign whose support $\mbox{supp}(s)$ contains infinitely many points and is contained in the real line $\mathbb{R}$. If $\mbox{supp}(s)$ is an unbounded set we assume additionally that   $x^n \in L_1(s), n \in \mathbb{N}$.  By $\Delta = \mbox{Co}(\mbox{supp}(s))$ we denote the smallest interval which contains  $\supp(s)$.   We denote this class of measures by ${\mathcal{M}}(\Delta)$. Let
\[ \widehat{s}(z) = \int\frac{d s(x)}{z-x}
\]
be the Cauchy transform of $s$.

Given any positive integer $n \in \mathbb{N}$ there exist polynomials $Q_n,P_n$
satisfying:
\begin{itemize}
\item $\deg Q_n \leq n,\quad  \deg P_n \leq n-1, \quad Q_n \not\equiv 0,$
\item $(Q_n \widehat{s} -P_n)(z) = \mathcal{O}(1/z^{n+1}),\quad  z \to \infty.$
\end{itemize}
The ratio  $\pi_n = P_n/Q_n$ of any two such polynomials defines a unique rational function  called the $n$th term of the diagonal sequence of Pad\'e approximants to $\widehat{s}$. Cauchy transforms of measures are important: for example, many elementary functions may be expressed through them, the resolvent function of a bounded selfadjoint operator adopts that form, and they characterize all functions holomorphic in the upper half plane whose image lies in the lower half plane and can be extended continuously to the complement of a finite segment $[a,b]$ of the real line taking negative values for $z < a$ and positive values for $z > b$ (then $\mbox{supp}(s) \subset [a,b]$), see \cite[Theorem A.6]{KN}. Providing efficient methods for their approximation is a central question in the theory of rational approximation.

When $\Delta$ is bounded,  A.A. Markov proved in \cite{Mar} (in the context of the theory of continued fractions) that
\[\lim_{n \to \infty} \pi_n(z) = \widehat{s}(z)
\]
uniformly on each compact subset of $\overline{\mathbb{C}} \setminus \Delta$.  It is easy to deduce that the limit takes place with geometric rate. In the same year, see \cite{Sti}, T.J. Stieltjes obtained an analogous theorem for Cauchy transforms of measures with unbounded support contained in a half line, under the assumption that the moment problem for the sequence $\left(c_n\right)_{n\geq 0}, c_n = \int x^n ds(x),$ is determinate. It is well known that the moment problem for measures of bounded support  is always determinate; therefore, Stieltjes' theorem contains Markov's result. In \cite{Car}, T. Carleman  proved  when $\Delta \subset \mathbb{R}_+$ that
\begin{equation} \label{Carle} \sum_{n \geq 1} |c_{n}|^{-1/2n} = \infty
\end{equation}
is sufficient for the moment problem to be determinate. For an arbitrary measure  $s \in \mathcal{M}(\Delta)$, where $\Delta$ is contained in a half line, we say that it satisfies Carleman's condition if after an affine transformation which takes $\Delta$ into $\mathbb{R}_+$ the image measure satisfies Carleman's condition.

In an attempt to extend Markov's theorem to a general class of meromorphic functions, A.A. Gonchar considered functions of the form $\widehat{s} + r$ where $r$ is a rational function whose poles lie in $\mathbb{C} \setminus \Delta$. In \cite{Gon0}, he proved that if  $\Delta$ is a bounded interval and $s' > 0$ a.e. on $\Delta$, then
\begin{equation} \label{MS} \lim_{n\to \infty} \frac{P_n}{Q_n}(z) = \widehat{s}(z)+r(z),
\end{equation}
uniformly on each compact subset of (inside) $\mathbb{C} \setminus \Delta$. Here, $\{P_n/Q_n\}_{n\geq 0}$ denotes the diagonal sequence of Pad\'e approximants of $\widehat{s}+r$, showing, additionally, that each pole of $r$ in $\mathbb{C} \setminus \Delta$ ``attracts'' as many zeros of $Q_n$ as its order and the remaining zeros of $Q_n$ accumulate on $\Delta$ as $n \to \infty$. Later, in \cite{Rak1} E.A. Rakhmanov obtained a full extension of Markov's theorem when $r$ has real coefficients and proved that if $r$ has complex coefficients then such a result is not possible  without extra assumptions on the measure $s$. The case of unbounded $\Delta$ was solved in \cite{L1}, when $r$ has real coefficients, and \cite{L2}, when $r$ has complex coefficients.

Pad\'e  approximation has two natural extensions to the case of vector rational approximation.  These extensions  were introduced by Hermite in order to study the transcendency of $e$. Other applications in number theory have been obtained. See \cite{Ass} for a survey of results in this direction.

Given a system of finite Borel measures $S = (s_1,\ldots,s_m)$ with constant sign and a multi-index ${\bf n} = (n_1,\ldots,n_m) \in \mathbb{Z}_+^m \setminus \{{\bf 0}\}, |{\bf n}|= n_1+\cdots+n_m$, where $\mathbb{Z}_+$ denotes the set of non-negative integers and $\bf 0$ the $m$-dimensional zero vector, their exist polynomials $a_{{\bf n},j}, j=0,\ldots,m,$ not all identically equal to zero, such that:
\begin{itemize}
\item[\textit{i)}] $\deg a_{{\bf n},j} \leq n_j -1, j=1,\ldots,m, \quad \deg a_{{\bf n},0} \leq \max(n_j) -2,$ $\,\,\,(\deg a_{{\bf n},j} \leq -1$ means that $a_{{\bf n},j} \equiv 0$)
\item[\textit{ii)}] $a_{{\bf n},0}(z) + \sum_{j=1}^m a_{{\bf n},j}(z) \widehat{s}_j(z) = \mathcal{O}(1/z^{|{\bf n}|}),\,\,\, z \to \infty$.
\end{itemize}
Analogously, there exist polynomials $Q_{\bf n}, P_{{\bf n},j}, j=1,\ldots,m$, satisfying:
\begin{itemize}
\item[\textit{i')}] $\deg Q_{\bf n} \leq |{\bf n}|, Q_{\bf n} \not\equiv 0, \quad \deg P_{{\bf n},j} \leq |{\bf n}|-1, j=1,\ldots,m,$
\item[\textit{ii')}] $Q_{\bf n}(z) \widehat{s}_j (z) - P_{{\bf n},j}(z) = \mathcal{O}(1/z^{n_j +1}), \quad z\to \infty, \quad j=1,\ldots,m.$
\end{itemize}
The existence of the vector of polynomials $(a_{{\bf n},1}, \ldots, a_{{\bf n},m})$ reduces to solving a homogeneus linear system of $|{\bf n}|-1$ equations on the total number of $|{\bf n}|$ coefficients  of $(a_{{\bf n},1}, \ldots, a_{{\bf n},m})$, and the existence of $Q_{\bf n}$ reduces to solving a homogeneus linear system of $|{\bf n}|$ equations on the total number of $|{\bf n}|+1$ coefficients of the  polynomial $Q_{\bf n}$; therefore, a non-trivial solution is guaranteed.
The polynomials $a_{{\bf n},0}$ and $P_{{\bf n},j},j=1,\ldots,m,$ are uniquely determined from \textit{ii)} and \textit{ii')} once their partners are found.\par
Traditionally, the systems of polynomials $(a_{{\bf n},0}, \ldots,a_{{\bf n},m})$ and $(Q_{\bf n},P_{{\bf n},1},\ldots,P_{{\bf n},m})$ have been called type I and type II Hermite-Pad\'e approximants of $(\widehat{s}_1,\ldots,\widehat{s}_m)$, respectively. When $m=1$ both definitions reduce to that of classical Pad\'e approximation.\par

Before stating our main result, let us introduce what is called a Nikishin system of measures to which we will restrict our study.
Let $\Delta_{\alpha}, \Delta_{\beta}$ be two intervals contained in the real line which have at most one point in common, $\sigma_{\alpha} \in {\mathcal{M}}(\Delta_{\alpha}), \sigma_{\beta} \in {\mathcal{M}}(\Delta_{\beta})$, and $\widehat{\sigma}_{\beta} \in L_1(\sigma_{\alpha})$. With these two measures we define a third one as follows (using the differential notation)
\[ d \langle \sigma_{\alpha},\sigma_{\beta} \rangle (x) := \widehat{\sigma}_{\beta}(x) d\sigma_{\alpha}(x).
\]
Above, $\widehat{\sigma}_{\beta}$ denotes the Cauchy transform of the measure $\sigma_{\beta}$.  The more appropriate notation $\widehat{\sigma_{\beta}}$ causes space consumption and aesthetic inconveniences. We need to take consecutive products of measures; for example,
\[\langle \sigma_{\gamma},  \sigma_{\alpha},\sigma_{\beta} \rangle :=\langle \sigma_{\gamma}, \langle \sigma_{\alpha},\sigma_{\beta} \rangle \rangle. \]
 Here, we assume not only that $\widehat{\sigma}_{\beta} \in L_1(\sigma_{\alpha})$ but also $\langle \sigma_{\alpha},\sigma_{\beta} \widehat{\rangle} \in L_1(\sigma_{\gamma})$ where $\langle \sigma_{\alpha},\sigma_{\beta} \widehat{\rangle}$ denotes the Cauchy transform of $\langle \sigma_{\alpha},\sigma_{\beta}  {\rangle}$. Inductively, one defines products of a finite number of measures.

\begin{definition} \label{Nikishin} Take a collection  $\Delta_j, j=1,\ldots,m,$ of intervals such that, for   $j=1,\ldots,m-1$
\[ \Delta_j \cap \Delta_{j+1} = \emptyset, \qquad \mbox{or} \qquad \Delta_j \cap \Delta_{j+1} = \{x_{j,j+1}\},
\]
where $x_{j,j+1}$ is a single point. Let $(\sigma_1,\ldots,\sigma_m)$ be a system of measures such that $\mbox{Co}(\supp (\sigma_j)) = \Delta_j, \sigma_j \in {\mathcal{M}}(\Delta_j), j=1,\ldots,m,$  and
\begin{equation} \label{eq:autom}
\langle \sigma_{j},\ldots,\sigma_k  {\rangle} := \langle \sigma_j,\langle \sigma_{j+1},\ldots,\sigma_k\rangle\rangle\in {\mathcal{M}}(\Delta_j),  \qquad  1 \leq j < k\leq m.
\end{equation}
When $\Delta_j \cap \Delta_{j+1} = \{x_{j,j+1}\}$ we also assume that $x_{j,j+1}$ is not a mass point of either $\sigma_j$ or $\sigma_{j+1}$.
We say that $(s_{1,1},\ldots,s_{1,m}) = {\mathcal{N}}(\sigma_1,\ldots,\sigma_m)$, where
\[ s_{1,1} = \sigma_1, \quad s_{1,2} = \langle \sigma_1,\sigma_2 \rangle, \ldots \quad , s_{1,m} = \langle \sigma_1, \sigma_2,\ldots,\sigma_m  \rangle
\]
is the Nikishin system of measures generated by $(\sigma_1,\ldots,\sigma_m)$.
\end{definition}

It is not difficult to show (see \cite[Theorem 1.5]{LM}) that if $\sigma_1 = s_{1,1}$ satisfies Carleman's condition \eqref{Carle} then $s_{1,k}, k=2,\ldots,m,$ also satisfies that condition.

Initially, E.M. Nikishin in \cite{Nik}  restricted himself to measures with bounded support and no intersection points between consecutive $\Delta_j$.
Definition \ref{Nikishin} includes interesting examples  which appear in practice (see, \cite[Subsection 1.4]{FL4II}). We follow the approach of \cite[Definition 1.2]{FL4II}  assuming additionally the existence of all the moments of the generating measures. This is done only for the purpose of simplifying the presentation without affecting too much the generality. However, we wish to point out that the results of this paper have appropriate formulations with the definition given in \cite{FL4II} of a Nikishin system.

In \cite[Lemma 2.9]{FL4II} it was shown that if $(\sigma_1,\ldots,\sigma_m)$ is a generator of a Nikishin system then $(\sigma_m,\ldots,\sigma_1)$ is also a generator (as well as any subsystem of consecutive measures drawn from them). When the supports are bounded and consecutive supports do not intersect this is trivially true. In the following, for $1\leq j\leq k\leq m$ we denote
\[ s_{j,k} := \langle \sigma_j,\sigma_{j+1},\ldots,\sigma_k \rangle, \qquad s_{k,j} := \langle \sigma_k,\sigma_{k-1},\ldots,\sigma_j \rangle.
\]

From the definition, type II Hermite-Pad\'e approximation is easy to view as an approximating scheme of the vector function
$(\widehat{s}_{1,1},\ldots,\widehat{s}_{1,m})$ by considering a sequence of vector rational functions of the form $(P_{{\bf n},1}/Q_{\bf n},\ldots,P_{{\bf n},m}/Q_{\bf n}), {\bf n} \in \Lambda \subset \mathbb{Z}_+^m$, where $Q_{\bf n}$ is a common denominator for all components, in \cite{BusLop} the authors obtain an analogue of Markov's Theroem for type II Hermite-Pad\'e approximation  with respect to a Nikishin system. For type I Hermite-Pad\'e approximation is not obvious   what is the object to be approximated or even what should be considered as the approximant. This problem was solved in \cite{LM} when the system of measures $S = (s_{1,1},\ldots,s_{1,m})$ is a Nikshin system. Later, in \cite{FLM} the authors consider a  type II Hermite-Pad\'e approximation with respect to a system of meromorphic functions  of the form  ${\bf f}=(f_{1},\ldots, f_{m}) = \widehat{\bf s} + {\bf r}$, where
\begin{equation}\label{fs}
f_{j}(z)=\widehat{s}_{1,j}(z)+ r_{j}(z), \qquad j=1,\ldots,m.
\end{equation}
Here,  ${\bf r}=(r_1,\ldots,r_m)=\left(\displaystyle \frac{v_{1}}{t_{1}},\ldots, \frac{v_{m}}{t_{m}}\right)$, is a vector of rational fracctions with real coefficients  such that $\deg v_{j}<\deg t_{j}=d_{j}$  for every $j=1,\ldots,m$. We assume that $v_j/t_j, j=1,\ldots,m$ is irreducible and ${\bf s}=(s_{1,1},\ldots,s_{1,m})$ is a Nikshin system. For  type II Hermite-Pad\'e approximation with respect to the system  ${\bf f}$ also we have an entension of Markov's theorem.\par
Our goal is to obtain a similar result for type I Hermite-Pad\'e approximation respect with to the system  ${\bf f}$.

\begin{theorem} \label{th1}  Let  $\Lambda \subset \mathbb{Z}_+^{m}$ be an infinite sequence of distinct muti-indices. Consider the corresponding sequence $\left(a_{{\bf n},0},\ldots, a_{{\bf n},m}\right), {\bf n} \in \Lambda,$ of  type I  Hermite-Pad\'e approximants of ${\bf f}$.
Assume  that the rational functions $r_{1},\ldots r_{m}$  have real coefficients and their  poles  lie in $\mathbb{C}\setminus (\Delta_{1}\cup \Delta_{m})$, for $j\neq k$ the poles of $r_j$ and $r_k$ are distinct.
Assume that
\begin{equation}\label{cond1} \sup_{{\bf n}\in \Lambda}\left(\max_{j=1,\ldots,m}(n_j) - \min_{k=1,\ldots,m}(n_k) \right)\leq C < \infty, \end{equation}
and that either $\Delta_{m-1}$ is bounded away from $\Delta_m$  or $\sigma_m$ satisfies Carleman's condition.
Then,
\begin{equation} \label{fund1} \lim_{{\bf n} \in \Lambda}  \frac{a_{{\bf n}, j}}{a_{{\bf n},m}} =  (-1)^{m-j}\widehat{s}_{m,j+1}, \qquad j=1,\ldots,m-1,
\end{equation}
and
\begin{equation} \label{fund0} \lim_{{\bf n} \in \Lambda}  \frac{a_{{\bf n}, 0}}{a_{{\bf n},m}} = (-1)^{m}\widehat{s}_{m,1}-\sum_{j=1}^{m-1}(-1)^{m-j}r_j\widehat{s}_{m,j+1}+r_m.
\end{equation}
uniformly on each compact subset $K$ contained in $(\mathbb{C}\setminus \Delta_{m})^{\prime},$ the set obtained deleting from $\mathbb{C}\setminus \Delta_{m} $ the poles of all the $r_j$.
 Additionally, let $\zeta$ be a zero of $T=t_1\cdot t_2\ldots t_m$ of multiplicity $\kappa$,  then
   for each $\varepsilon >0$ sufficiently small there exists an $N$ such that for all ${\bf n} \in \Lambda, |{\bf n}| > N$ and all $j=1,\ldots,m$  $a_{{\bf n},j}$ has exactly   $\kappa$ zeros in $\{z:|z-\zeta| < \varepsilon\}$ and the rest of their zeros acumalate on $\Delta_m\cup \{\infty\}$.
\end{theorem}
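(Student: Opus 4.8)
The plan is to prove the statement first for the ``last'' polynomial $a_{\mathbf{n},m}$, and then to propagate it to $a_{\mathbf{n},j}$, $1\le j\le m-1$, by combining \eqref{fund1} with the argument principle. Since $r_1,\dots,r_m$ and the generating measures have real coefficients, the polynomials $a_{\mathbf{n},j}$ may be taken with real coefficients. The elementary fact that makes the propagation possible is that $\widehat{s}_{m,j+1}$ is the Cauchy transform of a measure of constant sign supported on $\Delta_m$, so it has no zeros in $\mathbb{C}\setminus\Delta_m$; in particular it is holomorphic and different from zero on a full neighborhood of each zero $\zeta$ of $T$, because such a $\zeta$ is a pole of some $r_k$ and therefore lies in $\mathbb{C}\setminus(\Delta_1\cup\Delta_m)$.

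For $a_{\mathbf{n},m}$ I would argue by a two–sided count, in the spirit of \cite{LM}, \cite{FLM} (and, for $m=1$, of \cite{Gon0}, \cite{Rak1}, \cite{L1}). Multiplying the defining relation $a_{\mathbf{n},0}+\sum_{k}a_{\mathbf{n},k}f_k=\mathcal{O}(z^{-|\mathbf{n}|})$ by $T=t_1\cdots t_m$, which keeps a constant sign on $\Delta_m$ because its zeros lie off $\Delta_1\cup\Delta_m$, and translating the resulting relation into the orthogonality relations for $a_{\mathbf{n},m}$ as in \cite{LM}, one obtains that $a_{\mathbf{n},m}$ is orthogonal on $\Delta_m$ with respect to a measure of constant sign to a space of polynomials of dimension $n_m-1-\deg T$, hence $a_{\mathbf{n},m}$ has at least $n_m-1-\deg T$ sign changes in the interior of $\Delta_m$; here \eqref{cond1} is what guarantees that the number of orthogonality conditions is of the right order, and the hypothesis that $\Delta_{m-1}$ is bounded away from $\Delta_m$ or that $\sigma_m$ satisfies Carleman's condition is used, exactly as in the convergence part of the theorem, to keep these zeros inside $\Delta_m$. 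On the other hand, the residue/interpolation argument of Gonchar and Rakhmanov at the poles of the $r_k$ — this is where the hypotheses that the $r_k$ have real coefficients and pairwise distinct poles enter — forces, for every $\varepsilon>0$ small enough and all $|\mathbf{n}|>N$, at least $\kappa$ zeros of $a_{\mathbf{n},m}$ in $\{|z-\zeta|<\varepsilon\}$ for each zero $\zeta$ of $T$ of multiplicity $\kappa$, hence at least $\deg T$ zeros clustering at the zeros of $T$. Since $\deg a_{\mathbf{n},m}\le n_m-1$, the two lower bounds add up to $n_m-1$ and are therefore both saturated: for $|\mathbf{n}|>N$ the polynomial $a_{\mathbf{n},m}$ has exactly $\kappa$ zeros in $\{|z-\zeta|<\varepsilon\}$, its remaining $n_m-1-\deg T$ zeros lie in $\Delta_m$, and in particular the zeros of $a_{\mathbf{n},m}$ not attracted by the zeros of $T$ accumulate only on $\Delta_m\cup\{\infty\}$.

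Now fix $j\in\{1,\dots,m-1\}$, a zero $\zeta$ of $T$ of multiplicity $\kappa$, and $\varepsilon>0$ so small that the closed disk $\{|z-\zeta|\le\varepsilon\}$ is disjoint from $\Delta_m$ and contains no zero of $T$ other than $\zeta$, so that $\{|z-\zeta|\le\varepsilon\}\setminus\{\zeta\}\subset(\mathbb{C}\setminus\Delta_m)^{\prime}$. By \eqref{fund1} and the Weierstrass theorem, $a_{\mathbf{n},j}/a_{\mathbf{n},m}$ and its derivative converge uniformly on the circle $C_\varepsilon=\{|z-\zeta|=\varepsilon\}$ to $g_j:=(-1)^{m-j}\widehat{s}_{m,j+1}$ and $g_j'$, and by the first paragraph $g_j$ is holomorphic and zero-free on $\{|z-\zeta|\le\varepsilon\}$; by the second paragraph $a_{\mathbf{n},m}$ has no zeros on $C_\varepsilon$ once $|\mathbf{n}|>N$, and hence so does $a_{\mathbf{n},j}$, because $|a_{\mathbf{n},j}/a_{\mathbf{n},m}|$ stays bounded away from $0$ on $C_\varepsilon$. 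Integrating the identity
\[
\frac{a_{\mathbf{n},j}'}{a_{\mathbf{n},j}}=\frac{(a_{\mathbf{n},j}/a_{\mathbf{n},m})'}{a_{\mathbf{n},j}/a_{\mathbf{n},m}}+\frac{a_{\mathbf{n},m}'}{a_{\mathbf{n},m}}
\]
over $C_\varepsilon$ and applying the argument principle, the number of zeros of $a_{\mathbf{n},j}$ in $\{|z-\zeta|<\varepsilon\}$ equals $\kappa$, the number of zeros of $a_{\mathbf{n},m}$ there, plus $\tfrac{1}{2\pi i}\int_{C_\varepsilon}(a_{\mathbf{n},j}/a_{\mathbf{n},m})'/(a_{\mathbf{n},j}/a_{\mathbf{n},m})\,dz$, and the latter is an integer converging to $\tfrac{1}{2\pi i}\int_{C_\varepsilon}g_j'/g_j\,dz=0$, hence is $0$ for $|\mathbf{n}|>N$. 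Thus $a_{\mathbf{n},j}$ has exactly $\kappa$ zeros in $\{|z-\zeta|<\varepsilon\}$. Finally, if a subsequence of zeros of $a_{\mathbf{n},j}$ converged to some $w^*\notin\Delta_m\cup\{\infty\}$ that is not a zero of $T$, then $w^*\in(\mathbb{C}\setminus\Delta_m)^{\prime}$, and since $a_{\mathbf{n},m}$ is eventually zero-free near $w^*$ (second paragraph) while $a_{\mathbf{n},j}/a_{\mathbf{n},m}\to g_j$ uniformly near $w^*$ with $g_j(w^*)\ne0$, Hurwitz's theorem would give a contradiction; and no zero of $T$ can attract more than its multiplicity of zeros of $a_{\mathbf{n},j}$ by what was just proved. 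Hence the zeros of $a_{\mathbf{n},j}$ other than those attracted by the zeros of $T$ accumulate only on $\Delta_m\cup\{\infty\}$, which together with the second paragraph proves the claim for all $j=1,\dots,m$.

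The main obstacle is the second paragraph. Its ingredients — the orthogonality of $a_{\mathbf{n},m}$ on $\Delta_m$ with the correct number of conditions, and the residue argument forcing at least the right number of zeros to each pole of the $r_k$ using the reality of their coefficients and the distinctness of their poles — are individually classical, but they must be assembled so that, after adding the degree bound $\deg a_{\mathbf{n},m}\le n_m-1$, no zero is lost or gained; it is this rigidity that upgrades ``at least $\kappa$'' to ``exactly $\kappa$''. In practice this localization of the zeros of $a_{\mathbf{n},m}$ is obtained together with, and as a byproduct of, the proof of \eqref{fund1}--\eqref{fund0}, after which the propagation in the third paragraph is routine.
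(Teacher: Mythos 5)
Your third paragraph (the propagation from $a_{\mathbf{n},m}$ to $a_{\mathbf{n},j}$ via the argument principle and the zero-freeness of $\widehat{s}_{m,j+1}$ off $\Delta_m$) is sound and is essentially how the paper finishes. The genuine gap is the second paragraph, which you yourself flag as ``the main obstacle'' but never actually carry out, and whose two counting claims are not correct as stated. First, the orthogonality relations obtained by multiplying by $T$ and iterating the reduction lemma through the Nikishin system do \emph{not} give $a_{\mathbf{n},m}$ at least $n_m-1-\deg T$ sign changes on $\Delta_m$ in general: the iteration yields only $|\mathbf{n}|-D-\overline{n}_1-\cdots-\overline{n}_{m-1}-1$ sign changes (with $\overline{n}_j=\max\{n_k:k\geq j\}$), which falls short of $n_m-D-1$ by up to $(m-1)C$ unless the multi-index is decreasing. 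The sharp bound ``at most $D$ zeros off $\Delta_m$'' is available only for the component $a_{\mathbf{n},\overline{\jmath}}$ attached to the \emph{minimal} entry of $\mathbf{n}$, and obtaining even that requires the permutation and associated second Nikishin system of \cite{FL4}; the paper must then transfer the bound from $a_{\mathbf{n},\overline{\jmath}}$ to $a_{\mathbf{n},m}$ by a separate argument-principle contradiction using the auxiliary polynomials $q_{\mathbf{n}}$ collecting the free zeros. Second, the lower bound ``at least $\kappa$ zeros of $a_{\mathbf{n},m}$ near each zero $\zeta$ of $T$'' is not something a self-contained ``residue/interpolation argument'' delivers up front: in the paper it is a \emph{consequence} of Gonchar's lemma \cite{gon}, applied once one knows (i) the convergence in Hausdorff content of $a_{\mathbf{n},0}/a_{\mathbf{n},m}$ to a limit having exactly $D$ poles (which is where the distinctness of the poles of the $r_j$ enters) and (ii) the upper bound $D$ on the number of poles of these ratios outside $\Delta_m$. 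Your plan reverses this logical order and so presupposes what has to be proved.

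Relatedly, the convergence statements \eqref{fund1}--\eqref{fund0} themselves are nowhere established in your proposal: you invoke \eqref{fund1} in the propagation step, but the identification of the limit functions $(-1)^{m-j}\widehat{s}_{m,j+1}$ rests on the Hausdorff-content convergence lemma (the iterated reductions producing incomplete diagonal multi-point Pad\'e approximants of $\widehat{\sigma}_m$, $\langle\sigma_m,\sigma_{m-1}\widehat{\rangle}$, \ldots, combined with the Stieltjes-type Lemma for incomplete approximants, using Carleman's condition or the separation of $\Delta_{m-1}$ from $\Delta_m$). This is the backbone of the paper's proof and cannot be dispensed with; without it there is neither a candidate limit nor the input that Gonchar's lemma needs to upgrade to uniform convergence and to force the zero attraction. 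In short: the skeleton is right and the final propagation is correct, but the core analytic content --- Lemma~\ref{CCTI}, the $\overline{\jmath}$-argument bounding the free zeros, and the correct deployment of Gonchar's lemma --- is missing, and the shortcut you propose in its place (a saturated two-sided zero count for $a_{\mathbf{n},m}$ directly) does not close for general multi-indices satisfying only \eqref{cond1}.
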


Notice that the rational fractions $(r_1,\ldots,r_m)$ do not play any role in the expression of the limit  of $(\frac{a_{{\bf n}, 1}}{a_{{\bf n},m}},\ldots,\frac{a_{{\bf n}, m-1}}{a_{{\bf n},m}})$. On the other hand, all the information of $(r_1,\ldots,r_m)$ is contained in the expression of the limit of $\frac{a_{{\bf n}, 0}}{a_{{\bf n},m}}$.\par

This paper is organized as follows. Section 2 contains some auxiliary results. In Section 3 we prove Theorem \ref{th1} and describe some other consequences of the main result and further extensions.

\section{Some auxiliary results} \label{aux}

We begin with a result, which appears in \cite[Lemma 2.1]{LM} and is easy to deduce, which  gives an integral representation of the remainder of type I multi-point  Hermite-Pad\'e approximants.

\begin{lemma} \label{reduc} Let $(s_{1,1},\ldots,s_{1,m}) = \mathcal{N}(\sigma_1,\ldots,\sigma_m)$ be given. Assume that there exist polynomials with real coefficients $a_0,\ldots,a_m$ and a polynomial $w$ with real coefficients whose zeros lie in $\mathbb{C} \setminus \Delta_1$  such that
\[\frac{\mathcal{A}(z)}{w(z)} \in \mathcal{H}(\mathbb{C} \setminus \Delta_1)\qquad \mbox{and} \qquad \frac{\mathcal{A}(z)}{w(z)} = \mathcal{O}\left(\frac{1}{z^N}\right), \quad z \to \infty,
\]
where $\mathcal{A}  := a_0 + \sum_{k=1}^m a_k  \widehat{s}_{1,k} $ and $N \geq 1$. Let $\mathcal{A}_1  := a_1 + \sum_{k=2}^m a_k  \widehat{s}_{2,k} $. Then
\begin{equation} \label{eq:3}
\frac{\mathcal{A}(z)}{w(z)} = \int \frac{\mathcal{A}_1(x)}{(z-x)} \frac{d\sigma_1(x)}{w(x)}.
\end{equation}
If $N \geq 2$, we also have
\begin{equation} \label{eq:4}
\int x^{\nu}  \mathcal{A}_1(x)  \frac{d\sigma_1(x)}{w(x)}, \qquad \nu = 0,\ldots, N -2.
\end{equation}
In particular, $\mathcal{A}_1$ has at least $N -1$ sign changes in  $\stackrel{\circ}{\Delta}_1 $ (the interior on $\Delta_1$ in $\mathbb{R}$ with the usual topology).
\end{lemma}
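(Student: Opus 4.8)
The plan is to turn $\mathcal{A}$ into a genuine Cauchy transform by peeling off one level of the Nikishin system, and then read off \eqref{eq:3}, \eqref{eq:4} and the sign changes from the behaviour at infinity.

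\emph{Step 1 (reduction to a Cauchy transform).} Recall that $s_{1,1}=\sigma_1$ and, for $k\ge2$, $d s_{1,k}(x)=\widehat{s}_{2,k}(x)\,d\sigma_1(x)$ with $s_{1,k}\in\mathcal{M}(\Delta_1)$ by \eqref{eq:autom}; in particular each measure $x^{\nu}\mathcal{A}_1(x)\,d\sigma_1(x)$ has finite total variation. Writing $\widehat{s}_{1,k}(z)=\int\frac{d s_{1,k}(x)}{z-x}$ in $\mathcal{A}(z)=a_0(z)+\sum_{k=1}^{m}a_k(z)\widehat{s}_{1,k}(z)$ and replacing, inside each integral, $a_k(z)$ by $a_k(x)+\big(a_k(z)-a_k(x)\big)$ — using that $\frac{a_k(z)-a_k(x)}{z-x}$ is a polynomial in $z$ with polynomial coefficients in $x$ — yields
\[ \mathcal{A}(z)=\mathcal{P}(z)+\int\frac{\mathcal{A}_1(x)}{z-x}\,d\sigma_1(x) \]
with $\mathcal{P}$ a polynomial with real coefficients. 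Then from $\frac{1}{w(z)(z-x)}=\frac{1}{w(x)(z-x)}-\frac{1}{w(x)w(z)}\cdot\frac{w(z)-w(x)}{z-x}$, where $\frac{w(z)-w(x)}{z-x}$ is again a polynomial, one gets
\[ \frac{\mathcal{A}(z)}{w(z)}=\frac{\mathcal{P}(z)-\rho(z)}{w(z)}+\int\frac{\mathcal{A}_1(x)}{z-x}\,\frac{d\sigma_1(x)}{w(x)}, \qquad \deg\rho<\deg w, \]
the last integral being legitimate and holomorphic off $\supp\sigma_1$ because the zeros of $w$ lie in $\mathbb{C}\setminus\Delta_1$, so $1/w$ is bounded on $\supp\sigma_1$.

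\emph{Step 2 (the spurious polynomials vanish; orthogonality).} The rational function $(\mathcal{P}-\rho)/w$ has all its poles among the zeros of $w$, hence outside $\Delta_1$; on the other hand it equals $\frac{\mathcal{A}}{w}-\int\frac{\mathcal{A}_1(x)}{z-x}\frac{d\sigma_1(x)}{w(x)}$, which is holomorphic on $\mathbb{C}\setminus\Delta_1$ by hypothesis and by Step 1. Hence it has no poles at all, i.e.\ it is a polynomial; since $\mathcal{A}/w=\mathcal{O}(1/z^{N})$ with $N\ge1$ and the Cauchy integral also vanishes at $\infty$, that polynomial vanishes at $\infty$, so it is identically $0$. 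This is \eqref{eq:3}. Expanding \eqref{eq:3} at infinity, $\int\frac{\mathcal{A}_1(x)}{z-x}\frac{d\sigma_1(x)}{w(x)}=\sum_{\nu\ge0}z^{-\nu-1}\int x^{\nu}\mathcal{A}_1(x)\frac{d\sigma_1(x)}{w(x)}$ (an asymptotic expansion, valid because all moments of $\mathcal{A}_1\,d\sigma_1$ exist), and comparing with $\mathcal{O}(1/z^{N})$ kills the coefficients of $z^{-1},\dots,z^{-(N-1)}$ when $N\ge2$; this is \eqref{eq:4}.

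\emph{Step 3 (sign changes).} Here one uses that $\mathcal{A}_1$ is real-analytic and real-valued on $\stackrel{\circ}{\Delta}_1$ (the $a_k$ have real coefficients and $\widehat{s}_{2,k}$ is holomorphic and real there, $\Delta_1$ and $\Delta_2$ meeting in at most an endpoint), that $\mathcal{A}_1\not\equiv0$ (otherwise \eqref{eq:3} gives $\mathcal{A}\equiv0$, and peeling off the levels of the Nikishin system one at a time forces all the $a_k$ and then $a_0$ to vanish), and that $w$ keeps a constant sign on $\Delta_1$ (its real zeros lie off $\Delta_1$ and its non-real zeros contribute factors $|x-z_0|^2>0$). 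If $\mathcal{A}_1$ changed sign at only $\ell\le N-2$ points $x_1<\dots<x_\ell$ of $\stackrel{\circ}{\Delta}_1$, put $q(x)=\prod_{i=1}^{\ell}(x-x_i)$ (or $q\equiv1$); then $q\mathcal{A}_1$ has constant sign on $\Delta_1$ and is not identically zero on $\supp\sigma_1$, so $\int q(x)\mathcal{A}_1(x)\frac{d\sigma_1(x)}{w(x)}\ne0$, contradicting \eqref{eq:4} because $\deg q\le N-2$. Hence $\mathcal{A}_1$ has at least $N-1$ sign changes in $\stackrel{\circ}{\Delta}_1$.

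\emph{Where the work is.} None of this is deep — it is the standard ``reduction'' step for Nikishin systems — but three points need care: the convergence and the asymptotic expansion when $\Delta_1$ is unbounded, which is exactly where the moment hypotheses and \eqref{eq:autom} enter; the fact that $\mathcal{A}_1\not\equiv0$, which rests on the independence properties built into Definition \ref{Nikishin}; and the bookkeeping of Step 2 that makes the two auxiliary polynomials $\mathcal{P}$ and $\rho$ disappear, which hinges entirely on the zeros of $w$ lying in $\mathbb{C}\setminus\Delta_1$. I expect Step 2, together with the unbounded-support justification, to be the only mildly delicate part.
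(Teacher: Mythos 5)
Your argument is correct and is exactly the standard reduction for Nikishin systems: the paper itself offers no proof of this lemma (it only cites \cite[Lemma 2.1]{LM}, where the proof proceeds by the same splitting $a_k(z)=a_k(x)+(a_k(z)-a_k(x))$, the same $1/(w(z)(z-x))$ identity, removal of the polynomial part via the behaviour at infinity, and the same moment/sign-change argument). The one point you rightly flag but do not fully close --- that $\mathcal{A}_1$ cannot vanish $\sigma_1$-a.e.\ on $\supp\sigma_1$, which is what the contradiction in Step 3 actually requires and which, when $\Delta_1$ is unbounded and $\supp \sigma_1$ has no finite accumulation point, does not follow from the identity theorem alone --- is supplied in the literature by the perfectness (AT-property) results of \cite{FL4,FL4II}; with that input your proof is complete.
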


Some relations concerning the reciprocal and ratio of Cauchy transforms of measures will be useful.
It is  known that for each $\sigma
\in {\mathcal{M}}(\Delta),$ where $\Delta$ is contained in a half line, there exists a measure $\tau \in
{\mathcal{M}}(\Delta)$ and ${\ell}(z)=a z+b, a = 1/|\sigma|, b \in {\mathbb{R}},$ such that
\begin{equation} \label{s22}
{1}/{\widehat{\sigma}(z)}={\ell}(z)+ \widehat{\tau}(z),
\end{equation}
where $|\sigma|$ is the total variation of the measure $\sigma.$  See  \cite[Appendix]{KN} and \cite[Theorem 6.3.5]{stto} for measures with compact support, and \cite[Lemma 2.3]{FL4} when the support is contained in a half line. If $\sigma$ satisfies Carleman's condition then $\tau$ satisfies that condition (see \cite[Theorem 1.5]{LM}).

We call $\tau$  the inverse measure of $\sigma.$ They appear frequently in our reasonings, so we will fix a
notation to distinguish them. In relation with measures  denoted with $s$ they will carry over to them the
corresponding sub-indices. The same goes for the  polynomials
$\ell$. For example,
\[
{1}/{\widehat{s}_{j,k}(z)}  ={\ell}_{j,k}(z)+
\widehat{\tau}_{j,k}(z).
\]
We also write
\[
{1}/{\widehat{\sigma}_{\alpha}(z)} ={\ell}_{\alpha }(z)+
\widehat{\tau}_{\alpha }(z).
\]
Sometimes we write $\langle
\sigma_{\alpha},\sigma_{\beta} \widehat{\rangle}$ in place of $\widehat{s}_{\alpha,\beta}$.  In \cite[Lemma 2.10]{FL4}, several formulas involving ratios of Cauchy transforms were proved. The most useful ones  in this paper establish that
\begin{equation} \label{4.4}
\frac{\widehat{s}_{1,k}}{\widehat{s}_{1,1}} =
\frac{|s_{1,k}|}{|s_{1,1}|} - \langle \tau_{1,1},\langle s_{2,k},\sigma_1
\rangle \widehat{\rangle}  , \qquad  1=j < k \leq m.
\end{equation}

The notion of convergence in Hausdorff content plays a central  role in the proof of Theorem~\ref{th1}. Let $B$ be a subset of the complex plane $\mathbb{C}$. By
$\mathcal{U}(B)$ we denote the class of all coverings of $B$ by at
most a numerable set of disks. Set
$$
h(B)=\inf\left\{\sum_{i=1}^\infty
|U_i|\,:\,\{U_i\}\in\mathcal{U}(B)\right\},
$$
where $|U_i|$ stands for the radius of the disk $U_i$. The quantity
$h(B)$ is called the $1$-dimensional Hausdorff content of the
set $B$.

Let $(\varphi_n)_{n\in\mathbb{N}}$ be a sequence of complex functions
defined on a domain $D\subset\mathbb{C}$ and $\varphi$ another
function defined on $D$ (the value $\infty$ is permitted). We say that
$(\varphi_n)_{n\in\mathbb{N}}$ converges in Hausdorff content to
the function $\varphi$ inside $D$ if for each compact
subset $\mathcal{K}$ of $D$ and for each $\varepsilon
>0$, we have
$$
\lim_{n\to\infty} h\{z\in K :
|\varphi_n(z)-\varphi(z)|>\varepsilon\}=0
$$
(by convention $\infty \pm \infty = \infty$). We denote this writing $h$-$\lim_{n\to \infty} \varphi_n =
\varphi$ inside $D$.

To obtain Theorem \ref{th1} we first prove \eqref{fund1} and \eqref{fund0} with convergence in Hausdorff content in place of uniform convergence (see Lemma \ref{CCTI} below). We need the following notion.\par

Let $s \in \mathcal{M}(\Delta)$ where $\Delta$ is contained in a half line of the real axis. Fix an arbitrary $\kappa \geq -1$. Consider a sequence of polynomials $(w_n)_{n \in \Lambda},  \Lambda \subset \mathbb{Z}_+,$ such that $\deg w_n = \kappa_n \leq 2n + \kappa +1$, whose zeros lie in $\mathbb{R} \setminus \Delta$. Let $(R_n)_{n \in \Lambda}$ be a sequence of rational functions $R_n = p_n/q_n$ with real coefficients satisfying the following conditions for each $n \in \Lambda$:
\begin{itemize}
\item[a)] $\deg p_n \leq n + \kappa,\quad  \deg q_n \leq n, \quad q_n \not\equiv 0,$
\item[b)] $ {(q_n \widehat{s} - p_n)(z)}/{w_n} = \mathcal{O}\left( {1}/{z^{n+1 - \ell}}\right) \in \mathcal{H}(\mathbb{C}\setminus \Delta), z \to \infty,$ where $\ell \in \mathbb{Z}_+$ is fixed.
\end{itemize}
We say that $(R_n)_{n \in \Lambda}$ is a sequence of incomplete diagonal multi-point Pad\'e approximants of $\widehat{s}$.\par
Notice that in this construction for each $n \in \Lambda$ the number of free parameters equals $2n + \kappa +2$ whereas the number of homogeneous linear equations to be solved in order to find $q_n$ and $p_n$ is equal to $2n + \kappa - \ell + 1$. When $\ell =0$ there is only one more parameter than equations and  $R_n$ is defined uniquely coinciding with a (near) diagonal multi-point Pad\'e approximation. When $\ell \geq 1$ uniqueness is not guaranteed, thus the term incomplete.\par
For sequences of incomplete diagonal multi-point Pad\'e approximants, the following Stieltjes type theorem was proved in \cite[Lemma 2]{BusLop} in terms of convergence in logaritmic capacity and we reformulate it using $1$-Hausdorff content. The proof is basically the same.

\begin{lemma} \label{BusLop}
Let $s \in \mathcal{M}(\Delta)$ be given where $\Delta$ is contained in a half line. Assume that $(R_n)_{n \in \Lambda}$ satisfies {\rm a)-b)} and either the number of zeros of $w_n$ lying on a bounded segment of $\mathbb{R} \setminus \Delta$ tends to infinity as $n\to\infty, n \in \Lambda$, or $s$ satisfies Carleman's condition.
Then
\[ h-\lim_{n \in \Lambda} R_n = \widehat{s},\qquad \mbox{inside}\qquad  \mathbb{C} \setminus \Delta.
\]
\end{lemma}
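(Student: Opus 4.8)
The plan is to reduce everything to the remainder
\[
\widehat{s}(z)-R_n(z) = \frac{(q_n\widehat{s}-p_n)(z)}{q_n(z)}
\]
and to treat its two ingredients separately: the numerator, which is small because of the interpolation/decay condition b), and the denominator $q_n$, whose zeros off $\Delta$ are the only source of trouble. First I would obtain an integral representation and the associated orthogonality relations by applying Lemma \ref{reduc} in the case $m=1$, with $\sigma_1=s$, $a_0=-p_n$, $a_1=q_n$, $w=w_n$ and $N=n+1-\ell$; conditions a)--b) are precisely its hypotheses, and $p_n,q_n$ are real. This yields
\[
\frac{(q_n\widehat{s}-p_n)(z)}{w_n(z)} = \int\frac{q_n(x)}{z-x}\,\frac{ds(x)}{w_n(x)}
\]
together with $\int x^{\nu} q_n(x)\,\dfrac{ds(x)}{w_n(x)}=0$ for $\nu=0,\ldots,n-\ell-1$.

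Next I would localize the zeros of $q_n$. Since $\Delta$ is an interval and every zero of $w_n$ lies in $\mathbb{R}\setminus\Delta$, the real polynomial $w_n$ is nonvanishing, hence of constant sign, on the interior of $\Delta$; consequently $d\mu_n:=ds/w_n$ is a measure of constant sign, i.e. $\mu_n\in\mathcal{M}(\Delta)$. The $n-\ell$ orthogonality relations $\int x^{\nu}q_n\,d\mu_n=0$ then force, exactly as in the last assertion of Lemma \ref{reduc}, at least $n-\ell$ sign changes of $q_n$ in the interior of $\Delta$; since $\deg q_n\leq n$, at most $\ell$ zeros of $q_n$ can lie outside $\Delta$. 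As $\ell$ is fixed, the poles of $R_n$ in $\mathbb{C}\setminus\Delta$ are uniformly bounded in number, and these spurious poles are the sole obstruction to uniform convergence.

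Then I would estimate the error away from $\Delta$. Factoring $q_n=Q_n\,\widetilde{q}_n$, where $Q_n$ collects the $\geq n-\ell$ zeros in the interior of $\Delta$ and $\deg\widetilde{q}_n\leq\ell$, and using the orthogonality to replace $1/(z-x)$ by its Hermite interpolant at the zeros of $Q_n$, the representation
\[
\widehat{s}(z)-R_n(z) = \frac{w_n(z)}{q_n(z)}\int\frac{q_n(x)}{w_n(x)(z-x)}\,ds(x)
\]
acquires an interpolation-remainder factor $Q_n(x)/Q_n(z)$. Because the zeros of $Q_n$ lie on $\Delta$, when $\Delta$ is bounded this produces a geometric bound on each compact $K\subset\mathbb{C}\setminus\Delta$ away from the spurious zeros. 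Covering the at most $\ell$ spurious zeros of $q_n$ by disks of total radius less than $\varepsilon$ — a set of $1$-Hausdorff content less than $\ell\varepsilon$ — and bounding $|\widehat{s}-R_n|$ outside them, one obtains $h\text{-}\lim_n R_n=\widehat{s}$ inside $\mathbb{C}\setminus\Delta$.

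The hard part will be the unbounded case, where the geometric mechanism above is unavailable and one of the two extra hypotheses must do the work. Here I would invoke the Stieltjes--Carleman circle of ideas: after normalizing $q_n$ and passing to a subsequence, the zero-counting measures of $Q_n$ and the relevant orthogonality measures converge weakly, and Carleman's condition guarantees that the moment problem for $s$ is determinate, forcing any such weak limit to reproduce $\widehat{s}$ — determinacy is exactly what prevents the orthogonality from being satisfied by a ``wrong'' limit measure. When instead the number of zeros of $w_n$ on a bounded segment of $\mathbb{R}\setminus\Delta$ tends to infinity, these accumulating interpolation nodes substitute for the missing determinacy and pin the limit to $\widehat{s}$ by an analogous argument. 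The delicate point is to control the normalization uniformly and to make this weak-limit argument compatible with convergence in content rather than uniform convergence; this is precisely where the reformulation of \cite[Lemma 2]{BusLop} from logarithmic capacity to $1$-Hausdorff content enters, the passage between the two being routine since in every case the exceptional set is enclosed by a fixed number of small disks.
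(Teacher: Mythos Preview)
The paper does not supply its own proof of this lemma: it is quoted from \cite[Lemma 2]{BusLop}, with only the remark that after replacing logarithmic capacity by $1$-Hausdorff content ``the proof is basically the same.'' So there is no in-paper argument to compare against; your proposal is effectively a sketch of the proof behind the cited result.

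The structure of your sketch is sound. Applying Lemma~\ref{reduc} in the case $m=1$ to get the integral representation and the $n-\ell$ orthogonality relations, deducing that $q_n$ has at most $\ell$ zeros off $\Delta$, and then covering those spurious poles by disks of small total radius is exactly how one passes from a uniform estimate to convergence in $1$-Hausdorff content; for bounded $\Delta$ the geometric decay via the factor $Q_n(x)/Q_n(z)$ is standard and your account is fine. The place where the sketch is genuinely thin is the unbounded case. Your phrasing---weak limits of \emph{zero-counting} measures forced by determinacy to reproduce $\widehat{s}$---is not quite the mechanism. What one actually uses (as in \cite{BusLop} and the Stieltjes tradition) is that the orthogonality lets one write $R_n$, up to a controlled polynomial part, as the Cauchy transform of a discrete \emph{quadrature} measure whose low-order moments coincide with those of $s$; Carleman's condition makes the moment problem determinate, which forces these quadrature measures to converge weakly to $s$ and hence $R_n\to\widehat{s}$ off the spurious poles. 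Under the alternative hypothesis, the accumulating zeros of $w_n$ on a bounded segment of $\mathbb{R}\setminus\Delta$ supply enough interpolation to pin down the limit without determinacy. Making either branch precise---controlling total masses and tails so that weak-$*$ limits exist and the passage to Cauchy transforms is justified---is the real content of the cited lemma and would need to be spelled out for your outline to become a proof.
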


Let $(s_{1,1},\ldots,s_{1,m}) = \mathcal{N}(\sigma_1,\ldots,\sigma_m)$, and ${\bf n} \in \mathbb{Z}_+^{m}  \setminus \{\bf 0\}$ be given.  Fix   $M \in \mathbb{Z}_+$. Consider a vector polynomial $\left(p_{{\bf n},0},\ldots, p_{{\bf n},m}\right),$ not  identically equal zero, which satisfies:
\begin{itemize}
\item[a')] $\deg p_{{\bf n},j} \leq n_j -1, j=1,\ldots,m,$
\item[b')] $p_{{\bf n},0} + \sum_{j= 1}^m p_{{\bf n},j} \widehat{s}_{1,j} = \mathcal{O}(1/z^{|{\bf n}| -M}) \in \mathcal{H}(\mathbb{C} \setminus \Delta_1)$
\end{itemize}
We call  $\left(p_{{\bf n},0},\ldots, p_{{\bf n},m}\right)$ an incomplete type I  Hermite-Pad\'e  approximation of $(s_{1,1},\ldots,s_{1,m})$ with respect to  $\bf n$.

The following lemma  is an extended version of Lemma~\ref{BusLop} and is   contained in \cite[Lemma 3.1]{LM}   for the case  when $M = 0$.

\begin{lemma} \label{CCTI}  Let ${\bf s}= (s_{1,1},\ldots,s_{1,m}) = \mathcal{N}(\sigma_1,\ldots,\sigma_m)$ and $\Lambda \subset \mathbb{Z}_+^{m}$ be an infinite sequence of distinct multi-indices. Fix  $M \in \mathbb{Z}_+$. Consider a sequence of incomplete type I multi-point Hermite-Pad\'e approximants of ${\bf s}$ with respect to ${{\bf n} \in \Lambda}$.
Assume that $(\ref{cond1})$ takes place and that either $\Delta_{m-1}$ is bounded away from $\Delta_m$ or $\sigma_m$ satisfies $(\ref{Carle})$. Then,
for each fixed $j=0,\ldots,m-1$
\begin{equation} \label{convHaus}
h-\lim_{{\bf n}\in \Lambda}\frac{p_{{\bf n}, j}}{p_{{\bf n},m}} = (-1)^{m-j}\widehat{s}_{m,j+1}, \quad h-\lim_{n\in \Lambda}\frac{p_{{\bf n}, m}}{p_{{\bf n},j}} = \frac{(-1)^{m-j}}{\widehat{s}_{m,j+1} },
\end{equation}
inside $\mathbb{C} \setminus \Delta_m$.
\end{lemma}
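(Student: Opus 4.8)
The plan is to adapt the proof of \cite[Lemma~3.1]{LM}, which is exactly the case $M=0$, and to check that a fixed $M\in\mathbb{Z}_{+}$ only lowers by a bounded amount the orders of contact produced along the way, so that none of the limiting arguments is affected; recall that $|{\bf n}|\to\infty$ as ${\bf n}$ runs over $\Lambda$, so in particular all the quantities below are defined for $|{\bf n}|$ large. First I would unravel condition {\rm b')} by iterating Lemma~\ref{reduc}. Put $\mathcal{A}_{{\bf n},0}:=p_{{\bf n},0}+\sum_{k=1}^{m}p_{{\bf n},k}\widehat{s}_{1,k}$, $w_{{\bf n},0}\equiv 1$, $N_{0}:=|{\bf n}|-M$ and $\mathcal{A}_{{\bf n},m}:=p_{{\bf n},m}$. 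For $j=1,\dots,m$ one obtains $\mathcal{A}_{{\bf n},j}=p_{{\bf n},j}+\sum_{k=j+1}^{m}p_{{\bf n},k}\widehat{s}_{j+1,k}$ and
\[\frac{\mathcal{A}_{{\bf n},j-1}(z)}{w_{{\bf n},j-1}(z)}=\int\frac{\mathcal{A}_{{\bf n},j}(x)}{z-x}\,\frac{d\sigma_{j}(x)}{w_{{\bf n},j-1}(x)},\qquad \int x^{\nu}\mathcal{A}_{{\bf n},j}(x)\,\frac{d\sigma_{j}(x)}{w_{{\bf n},j-1}(x)}=0\ \ (0\le\nu\le N_{j-1}-2),\]
where for $j\le m-1$ I let $w_{{\bf n},j}$ be a monic polynomial with simple zeros at $N_{j-1}-1$ sign changes of $\mathcal{A}_{{\bf n},j}$ in $\stackrel{\circ}{\Delta}_{j}$, so that $\mathcal{A}_{{\bf n},j}/w_{{\bf n},j}\in\mathcal{H}(\mathbb{C}\setminus\Delta_{j+1})$ and $\mathcal{A}_{{\bf n},j}/w_{{\bf n},j}=\mathcal{O}(z^{-N_{j}})$, $z\to\infty$, with $N_{j}\ge N_{j-1}-\max_{k}n_{k}$ (one uses here that $\stackrel{\circ}{\Delta}_{j}\cap\Delta_{j+1}=\emptyset$, whence $d\sigma_{j}/w_{{\bf n},j-1}$ has constant sign on $\Delta_{j}$). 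By \eqref{cond1}, $\max_{k}n_{k}\le|{\bf n}|/m+C$, so $N_{j}\ge|{\bf n}|/m-M-(m-1)C\to\infty$ and $\deg w_{{\bf n},j}=N_{j-1}-1\to\infty$ for $1\le j\le m-1$.

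The base case is $j=m-1$: here $\mathcal{A}_{{\bf n},m-1}=p_{{\bf n},m-1}+p_{{\bf n},m}\widehat{s}_{m,m}$ with $\widehat{s}_{m,m}=\widehat{\sigma}_{m}$, and a direct count, in which \eqref{cond1} is really needed, shows that, setting $q_{{\bf n}}:=p_{{\bf n},m}$, $p_{{\bf n}}:=-p_{{\bf n},m-1}$ and $w_{{\bf n}}:=w_{{\bf n},m-1}$, the sequence $R_{{\bf n}}:=-p_{{\bf n},m-1}/p_{{\bf n},m}$ is a sequence of incomplete diagonal multi-point Pad\'e approximants of $\widehat{\sigma}_{m}$ in the sense of {\rm a)-b)} preceding Lemma~\ref{BusLop}, with the parameters $\kappa$ and the incompleteness index $\ell$ fixed (independent of ${\bf n}$). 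The zeros of $w_{{\bf n},m-1}$ lie on $\stackrel{\circ}{\Delta}_{m-1}\subset\mathbb{R}\setminus\Delta_{m}$ and their number tends to infinity; they sit on a fixed bounded segment of $\mathbb{R}\setminus\Delta_{m}$ when $\Delta_{m-1}$ is bounded away from $\Delta_{m}$, and otherwise $\sigma_{m}$ satisfies \eqref{Carle}. In either case Lemma~\ref{BusLop} yields $h-\lim_{{\bf n}\in\Lambda}(-p_{{\bf n},m-1}/p_{{\bf n},m})=\widehat{s}_{m,m}$ inside $\mathbb{C}\setminus\Delta_{m}$, which is \eqref{convHaus} for $j=m-1$; since $\widehat{s}_{m,m}$ is holomorphic and zero-free on $\mathbb{C}\setminus\Delta_{m}$, and convergence in Hausdorff content to a finite zero-free function is preserved under reciprocation (with $\infty$ admitted at $\infty$), the companion limit holds as well.

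Then I would descend from $j=m-1$ to $j=0$. Assume \eqref{convHaus} for all indices larger than $j$. Dividing $\mathcal{A}_{{\bf n},j}=p_{{\bf n},j}+\sum_{k=j+1}^{m}p_{{\bf n},k}\widehat{s}_{j+1,k}$ by $p_{{\bf n},m}$ and passing to $h$-limits, the inductive hypothesis takes care of every term on the right except $\mathcal{A}_{{\bf n},j}/p_{{\bf n},m}$, which is handled by the key fact
\[h-\lim_{{\bf n}\in\Lambda}\frac{\mathcal{A}_{{\bf n},j}}{p_{{\bf n},m}}=0\qquad\mbox{inside}\qquad\mathbb{C}\setminus\Delta_{m}.\]
This gives $h-\lim_{{\bf n}\in\Lambda}p_{{\bf n},j}/p_{{\bf n},m}=-\widehat{s}_{j+1,m}-\sum_{k=j+1}^{m-1}(-1)^{m-k}\widehat{s}_{m,k+1}\widehat{s}_{j+1,k}$, and it remains to recognize this as $(-1)^{m-j}\widehat{s}_{m,j+1}$, i.e.\ to prove the identity
\[(-1)^{m-j}\widehat{s}_{m,j+1}+\widehat{s}_{j+1,m}+\sum_{k=j+1}^{m-1}(-1)^{m-k}\widehat{s}_{m,k+1}\widehat{s}_{j+1,k}=0,\]
which connects the Cauchy transforms of $\mathcal{N}(\sigma_{1},\dots,\sigma_{m})$ with those of the reversed generator $(\sigma_{m},\dots,\sigma_{1})$; I would prove it by induction on $m-j$ starting from $\widehat{\sigma}_{\alpha}\widehat{\sigma}_{\beta}=\widehat{\langle\sigma_{\alpha},\sigma_{\beta}\rangle}+\widehat{\langle\sigma_{\beta},\sigma_{\alpha}\rangle}$ (this is essentially \cite[Lemma~2.10]{FL4}; compare \eqref{4.4}). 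Reciprocation again produces the second limit in \eqref{convHaus}, and the descent reaches $j=0$ with no change (there $\widehat{s}_{j+1,k}=\widehat{s}_{1,k}$).

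The hard part is the key fact $\mathcal{A}_{{\bf n},j}/p_{{\bf n},m}\to 0$ in Hausdorff content on compact subsets of $\mathbb{C}\setminus\Delta_{m}$. I would prove it as in \cite[Lemma~3.1]{LM} and \cite[Lemma~2]{BusLop}: the orthogonality relations above localize all but a bounded number of the zeros of $p_{{\bf n},m}$ (and of the sign changes of the $\mathcal{A}_{{\bf n},i}$) on the corresponding intervals, so that on a compact $K\subset\mathbb{C}\setminus\Delta_{m}$ one bounds $|p_{{\bf n},m}|$ from below off a set of arbitrarily small Hausdorff content, while the vanishing of order $N_{j}\to\infty$ at infinity, together with $\deg w_{{\bf n},j}\to\infty$ and a Bernstein--Walsh type estimate for $\mathcal{A}_{{\bf n},j}/w_{{\bf n},j}$, keeps $\mathcal{A}_{{\bf n},j}$ geometrically small; feeding these into the convergence mechanism behind Lemma~\ref{BusLop} gives the claim. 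The only change relative to \cite{LM} is that the $N_{j}$ and the incompleteness index $\ell$ change by an amount bounded in terms of $M$, $C$ and $m$, which does not disturb any of the estimates.
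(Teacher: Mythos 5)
Your setup (iterating Lemma~\ref{reduc} to produce the $w_{{\bf n},j}$, the observation that a fixed $M$ only shifts the orders of contact by a bounded amount, and the base case $j=m-1$ via Lemma~\ref{BusLop}) matches the paper. The descent, however, hinges on the unproved ``key fact'' $h$-$\lim_{{\bf n}\in\Lambda}\mathcal{A}_{{\bf n},j}/p_{{\bf n},m}=0$, and this is precisely where the real work lies; the justification you sketch for it does not go through. A Bernstein--Walsh estimate for $\mathcal{A}_{{\bf n},j}/w_{{\bf n},j}$ presupposes bounded intervals, whereas the lemma covers unbounded supports under Carleman's condition \eqref{Carle}; and ``the convergence mechanism behind Lemma~\ref{BusLop}'' is built for \emph{two-term} forms $q_n\widehat{s}-p_n$ with a single generating measure, not for the $(m-j+1)$-term form $\mathcal{A}_{{\bf n},j}=p_{{\bf n},j}+\sum_{k=j+1}^m p_{{\bf n},k}\widehat{s}_{j+1,k}$. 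There is also no a priori normalization tying the size of $\mathcal{A}_{{\bf n},j}$ on a compact set to the size of $p_{{\bf n},m}$, so a lower bound on $|p_{{\bf n},m}|$ off a small set plus decay of $\mathcal{A}_{{\bf n},j}/w_{{\bf n},j}$ at infinity does not by itself yield the ratio tending to zero. (Note also that your route, which passes through the identity \eqref{chile} and the functions $\widehat{s}_{j+1,k}$, would at best give convergence inside $\mathbb{C}\setminus(\Delta_{j+1}\cup\Delta_m)$ rather than inside $\mathbb{C}\setminus\Delta_m$ as claimed; an extra argument would be needed to cross $\Delta_{j+1}$.)

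The paper's proof avoids all of this by an algebraic elimination that you are missing. For fixed $j$ one divides $\mathcal{A}_{{\bf n},j}$ successively by $\widehat{\sigma}_{j+1}$, then by $\langle\sigma_{j+2},\sigma_{j+1}\widehat{\rangle}$, and so on; at each stage \eqref{s22} and \eqref{4.4} rewrite the quotient again in the form of $\mathcal{A}$ in Lemma~\ref{reduc}, and the resulting orthogonality relations \eqref{eq:4} (with respect to the inverse measures $\tau$) kill one polynomial $p_{{\bf n},j+1},p_{{\bf n},j+2},\dots$ per step while producing a new polynomial $\widetilde{w}$ of large degree with zeros on the next interval. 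After $m-j-1$ reductions one is left with the \emph{two-term} form $p_{{\bf n},j}-(-1)^{m-j}p_{{\bf n},m}\langle\sigma_m,\dots,\sigma_{j+1}\widehat{\rangle}$ divided by a polynomial $w^*_{{\bf n},j}$ with zeros on $\Delta_{m-1}$, which is exactly the shape a)--b) required by Lemma~\ref{BusLop}; that lemma (and its application to the reciprocal via \eqref{s22}) then gives both limits in \eqref{convHaus} directly, with no induction on $j$ and no need for the identity \eqref{chile} (which the paper only uses afterwards, in a remark, to deduce the vanishing of $\mathcal{A}_{{\bf n},j}/a_{{\bf n},m}$ as a \emph{consequence} of the lemma --- the reverse of your logical order). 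To repair your argument you would either have to reproduce this elimination, or give a genuine proof of the key fact for multi-term linear forms over possibly unbounded intervals, which is not a routine adaptation of Lemma~\ref{BusLop}.
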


\begin{proof}
If $m=1$ the statement reduces directly to Lemma \ref{BusLop}, so without loss of generality we can assume that $m \geq 2$. Fix ${\bf n} \in \Lambda$.
 Denote
 $$ \mathcal{A}_{{\bf n},j}(z) := p_{{\bf n},j}(z) + \sum_{k= j+1}^m p_{{\bf n},k}(z) \widehat{s}_{j+1,k}(z), \qquad j=0,\ldots,m-1.$$

From  Lemma \ref{reduc} it follows that
 $\mathcal{A}_{{\bf n},1}$ has at least  $|{\bf n}| -M-1$ simple zeros in the interior of  $\Delta_1$. Therefore, there exists a polynomial
$w_{{\bf n},1}, \deg w_{{\bf n},1} = |{\bf n}| -M-1,$ whose zeros lie on $\Delta_1$ such that
\begin{equation} \label{A1}
\frac{\mathcal{A}_{{\bf n},1}}{w_{{\bf n},1}}  \in \mathcal{H}(\mathbb{C} \setminus \Delta_2).
\end{equation}
Set $\overline{n}_j = \max \{n_k: k=j,\ldots,m\}$. Taking into account the degrees of the polynomials $p_{{\bf n},j}$ and $w_{{\bf n},1}$ it follows that
\begin{equation} \label{O1}
 \frac{\mathcal{A}_{{\bf n},1}}{w_{{\bf n},1}} = \mathcal{O}\left(\frac{1}{z^{|{\bf n}| -M- \overline{n}_1}}\right), \qquad z\to \infty.
\end{equation}

From  (\ref{A1}), (\ref{O1}), and Lemma \ref{reduc} we have
that $\mathcal{A}_{{\bf n},2}$ has at least $|{\bf n}| - M-\overline{n}_1 -1$ sign changes in  $\stackrel{\circ}{\Delta}_2$.  Therefore, there exists a polynomial  $w_{{\bf n},2}, \deg w_{{\bf n},2} = |{\bf n}| - M-\overline{n}_1 -1$, whose zeros lie on $\Delta_2$, such that
\[ \frac{\mathcal{A}_{{\bf n},2}}{w_{{\bf n},2}}  \in \mathcal{H}(\mathbb{C} \setminus \Delta_3), \quad \mbox{and} \quad \frac{\mathcal{A}_{{\bf n},2}}{w_{{\bf n},2}} = \mathcal{O}\left(\frac{1}{z^{|{\bf n}| -M- \overline{n}_1 - \overline{n}_2}}\right) , \quad z \to \infty.
\]

Iterating this process, using Lemma \ref{reduc} several times, on step $j,\, j \in \{1,\ldots,m\},$ we find that there exists a polynomial
$w_{{\bf n},j}, \deg w_{{\bf n},j} = |{\bf n}| -M-\overline{n}_1 -\cdots - \overline{n}_{j-1}- 1,$ whose zeros are points where $\mathcal{A}_{{\bf n},j}$ changes sign on $\Delta_j$ such that
\begin{equation} \label{Anj}
\frac{\mathcal{A}_{{\bf n},j}}{w_{{\bf n},j}}  \in \mathcal{H}(\mathbb{C} \setminus \Delta_{j+1}), \quad \mbox{and} \quad \frac{\mathcal{A}_{{\bf n},j}}{w_{{\bf n},j}} = \mathcal{O}\left(\frac{1}{z^{|{\bf n}| -M- \overline{n}_1 - \cdots-\overline{n}_j}}\right) , \quad z \to \infty.
\end{equation}
This process concludes as soon as $|{\bf n}| - M-\overline{n}_1 - \cdots-\overline{n}_j \leq 0$. Since $\lim_{{\bf n} \in \Lambda} |{\bf n}| = \infty$, because of (\ref{cond1}) we can always take $m$ steps for all ${\bf n} \in \Lambda$ with $|{\bf n}|$ sufficiently large. In what follows, we only consider such ${\bf n}$'s.

When $n_1 = \overline{n}_1\geq \cdots \geq n_m = \overline{n}_m$, we obtain that $\mathcal{A}_{{\bf n},m} \equiv p_{{\bf n},m}$ has at least  $n_m -M-1$ sign changes on $\Delta_m$. If $M=0$  since $\deg p_{{\bf n},m} \leq n_m-1$ this means that $\deg p_{{\bf n},m} = n_m-1$ and all its zeros lie on $\Delta_m$. (In fact, in this case we can prove that $\mathcal{A}_{{\bf n},j}, j=1,\ldots,m$ has exactly $|{\bf n}| - n_1-\cdots -n_{j-1}$ zeros in $\mathbb{C} \setminus \Delta_{j+1}$ that they are all simple and lie in the interior of $\Delta_j$, where $\Delta_{m+1} = \emptyset$).

In general,  $p_{{\bf n},m}$ has at least $|{\bf n}|-M - \overline{n}_1 - \cdots-\overline{n}_{m-1}-1$ sign changes on $\Delta_m$; therefore, the number of zeros of $p_{{\bf n},m}$ which may lie outside of $\Delta_m$ is bounded by
\[ \deg p_{{\bf n},m} - (|{\bf n}|-M - \overline{n}_1 - \cdots-\overline{n}_{m-1}-1) \leq \sum_{k=1}^{m-1} \overline{n}_k -n_k \leq (m-1)C+M,\]
where $C$ is the constant given in (\ref{cond1}), which does not depend on ${\bf n} \in \Lambda$.

For $j=m-1$   there exists $w_{{\bf n},m-1}, \deg w_{{\bf n},m-1} = |{\bf n}| -M- \overline{n}_{1} - \cdots - \overline{n}_{m-2} -1,$ whose zeros lie on $\Delta_{m-1}$ such that
$$\frac{\mathcal{A}_{{\bf n},m-1}}{w_{{\bf n},m-1}} = \frac{p_{{\bf n},m-1} + p_{{\bf n},m} \widehat{\sigma}_m}{w_{{\bf n},m-1}} \in \mathcal{H}(\mathbb{C} \setminus \Delta_m),$$
and
$$  \frac{\mathcal{A}_{{\bf n},m-1}}{w_{{\bf n},m-1}} = \mathcal{O}\left(\frac{1}{z^{|{\bf n}| -M- \overline{n}_1 - \cdots-\overline{n}_{m-1}}}\right) ,\qquad  z \to \infty,
$$
where $\deg  p_{{\bf n},m-1} \leq n_{m-1} -1, \deg  p_{{\bf n},m} \leq n_{m} -1$. Thus, using (\ref{cond1}) it is easy to check that  $(p_{{\bf n},m-1}/p_{{\bf n},m})_{ n \in \Lambda}$ forms a sequence of  incomplete diagonal multi-point Pad\'e approximants of $-\widehat{\sigma}_m$ satisfying a)-b) with appropriate values of $n,\kappa$ and $\ell$. Since $\sigma_m$ satisfies Carleman's condition, due  to Lemma \ref{BusLop} it follows that
\[ h-\lim_{n\in \Lambda} \frac{p_{{\bf n},m-1}}{p_{{\bf n},m}} = - \widehat{\sigma}_m, \qquad \mbox{inside} \qquad \mathbb{C} \setminus \Delta_m.
\]
Dividing by $\widehat{\sigma}_m$ and using (\ref{s22}), we also have
\[ \frac{\mathcal{A}_{{\bf n},m-1}}{\widehat{\sigma}_m w_{{\bf n},m-1}} = \frac{p_{{\bf n},m-1} \widehat {\tau  }_m + b_{{\bf n},m-1}}{w_{{\bf n},m-1}} \in \mathcal{H}(\mathbb{C} \setminus \Delta_m),
\]
where $b_{{\bf n},m-1} = p_{{\bf n},m} + \ell_m p_{{\bf n},m-1}$ and
\[  \frac{\mathcal{A}_{{\bf n},m-1}}{\widehat{\sigma}_m w_{{\bf n},m-1}} = \mathcal{O}\left(\frac{1}{z^{|{\bf n}| -M- \overline{n}_1 - \cdots-\overline{n}_{m-1}-1}}\right) ,\qquad  z \to \infty.
\]
Consequently, $(b_{{\bf n},m-1}/p_{{\bf n},m-1})_{ n \in \Lambda}$ forms a sequence of  incomplete diagonal multi-point Pad\'e approximants of $-\widehat{\tau}_m$ satisfying a)-b) with appropriate values of $n,\kappa$ and $\ell$. Again, $\tau_m$ satisfies Carleman's condition  and  Lemma \ref{BusLop} implies that
\[ h-\lim_{n\in \Lambda} \frac{b_{{\bf n},m-1}}{p_{{\bf n},m-1}} = - \widehat{\tau}_m, \qquad \mbox{inside} \qquad \mathbb{C} \setminus \Delta_m,
\]
which is equivalent to
\[ h-\lim_{n\in \Lambda} \frac{p_{{\bf n},m}}{p_{{\bf n},m-1}} = - \widehat{\sigma}_m^{-1}, \qquad \mbox{inside} \qquad \mathbb{C} \setminus \Delta_m,
\]
We have proved (\ref{convHaus}) for $j=m-1$.

For $j= m-2$, we have shown that there exists a polynomial
$w_{{\bf n},m-2}, \deg w_{n,m-2} = |{\bf n}| -M-\overline{n}_1 - \cdots \overline{n}_{m-3}- 1,$ whose zeros lie on $\Delta_{m-2} $ such that
\[ \frac{\mathcal{A}_{{\bf n},m-2}}{w_{{\bf n},m-2}} =  \frac{p_{{\bf n},m-2} + p_{{\bf n},m-1} \widehat{\sigma}_{m-1} + p_{{\bf n},m} \langle \sigma_{m-1},\sigma_m\widehat{\rangle}}{w_{{\bf n},m-2}} \in \mathcal{H}(\mathbb{C} \setminus \Delta_{m-1})
\]
and
\[  \frac{\mathcal{A}_{{\bf n},m-2}}{w_{{\bf n},m-2}} = \mathcal{O}\left(\frac{1}{z^{|{\bf n}|-M - \overline{n}_1 - \cdots-\overline{n}_{m-2}}}\right), \qquad z \to \infty.
\]
However, using (\ref{s22}) and (\ref{4.4}), we obtain
\[\frac{p_{{\bf n},m-2} + p_{{\bf n},m-1} \widehat{\sigma}_{m-1} + p_{{\bf n},m} \langle \sigma_{m-1},\sigma_m\widehat{\rangle}}{\widehat{\sigma}_{m-1}} =
\]
\[
(\ell_{m-1} p_{{\bf n},m-2}+ p_{{\bf n},m-1} + C_1 p_{{\bf n},m})+ p_{{\bf n},m-2}\widehat{\tau}_{m-1}     - p_{{\bf n},m} \langle  {\tau}_{m-1}, \langle \sigma_{m}, \sigma_{m-1} \rangle \widehat{\rangle},
\]
where $\deg \ell_{m-1} = 1$ and $C_1$ is a constant. Consequently, $  {\mathcal{A}_{{\bf n},m-2}}/(\widehat{\sigma}_{m-1}) $ adopts the form of $\mathcal{A}$ in Lemma \ref{reduc}, $ {\mathcal{A}_{{\bf n},m-2}}/({\widehat{\sigma}_{m-1}w_{{\bf n},m-2} }) \in \mathcal{H}(\mathbb{C} \setminus \Delta_{m-1})$,   and
\begin{equation} \label{Anm} \frac{\mathcal{A}_{{\bf n},m-2}}{\widehat{\sigma}_{m-1}w_{{\bf n},m-2}} = \mathcal{O}\left(\frac{1}{z^{|{\bf n}| -M- \overline{n}_1 - \cdots-\overline{n}_{m-2} -1}}\right), \qquad z \to \infty.
\end{equation}
From  Lemma \ref{reduc} it follows that for $ \nu = 0,\ldots, |{\bf n}| -M- \overline{n}_1 - \cdots-\overline{n}_{m-2} -3$
\[ \int_{\Delta_{m-1}} x^{\nu}  \left({p_{{\bf n},m-2}(x)     - p_{{\bf n},m}(x)   \langle \sigma_{m}, \sigma_{m-1} \widehat{\rangle}(x)} \right) \frac{d  \tau_{m-1}(x)}{w_{{\bf n},m-2}(x)} = 0.
\]

Therefore, $ p_{{\bf n},m-2}- p_{{\bf n},m} \langle \sigma_{m}, \sigma_{m-1} \widehat{\rangle} \in \mathcal{H}(\mathbb{C} \setminus \Delta_m)$ must have at least $|{\bf n}| -D- \overline{n}_1 - \cdots-\overline{n}_{m-2} -2$ sign changes on $\Delta_{m-1}$. This means that there exists a polynomial $w_{{\bf n},m-2}^*, \deg w_{{\bf n},m-2}^* = |{\bf n}| -M- \overline{n}_1 - \cdots-\overline{n}_{m-2} -2,$ whose zeros are simple and lie on $\Delta_{m-1}$ such that
\[ \frac{p_{{\bf n},m-2}- p_{{\bf n},m} \langle \sigma_{m}, \sigma_{m-1} \widehat{\rangle}}{w_{{\bf n},m-2}^*} \in \mathcal{H}(\mathbb{C} \setminus \Delta_m)
\]
and
\[ \frac{p_{{\bf n},m-2}- p_{{\bf n},m} \langle \sigma_{m}, \sigma_{m-1} \widehat{\rangle}}{w_{{\bf n},m-2}^*}  = \mathcal{O}\left(\frac{1}{z^{{|{\bf n}| -M- \overline{n}_1 - \cdots-\overline{n}_{m-3}-2\overline{n}_{m-2}-1}}}\right).
\]
Due to (\ref{cond1}), this implies that $(p_{{\bf n},m-2}/p_{{\bf n},m}), n \in \Lambda,$ is a sequence of incomplete diagonal Pad\'e approximants of $\langle \sigma_{m}, \sigma_{m-1} \widehat{\rangle}$ and this measure satisfies Carleman's condition. Using  Lemma \ref{BusLop} we obtain its convergence in Hausdorff content to $\langle \sigma_{m}, \sigma_{m-1} \widehat{\rangle}$. To prove the other part in (\ref{convHaus}), we divide by $\langle \sigma_m,\sigma_{m-1}\widehat{\rangle}(z)$ use (\ref{s22}) and proceed as we did in the case $j=m$.

Let us  prove (\ref{convHaus}) in general. Fix $j \in \{0,\ldots,m-3\}$ (for $j=m-2,m-1$ it's been proved). Having in mind (\ref{Anj}) we need to reduce $\mathcal{A}_{{\bf n},j}$ so as to eliminate all $p_{{\bf n},k}, k=j+1,\ldots,m-1$. We start out eliminating $p_{{\bf n},j+1}$. Consider the ratio $\mathcal{A}_{{\bf n},j}/\widehat{\sigma}_{j+1}$. Using (\ref{s22}) and (\ref{4.4}) we obtain
\[ \frac{\mathcal{A}_{{\bf n},j}}{\widehat{\sigma}_{j+1}} =    \left(\ell_{j+1} p_{{\bf n},j}+ \sum_{k=j+1}^m \frac{|s_{j+1,k}|}{|\sigma_{j+1}|} p_{{\bf n},j+1} \right) + p_{{\bf n},j}\widehat{\tau}_{j+1} - \sum_{k=j+2}^m p_{{\bf n},k} \langle  {\tau}_{j+1}, \langle s_{j+2,k}, \sigma_{j+1} \rangle \widehat{\rangle},
\]
and ${\mathcal{A}_{{\bf n},j}}/(\widehat{\sigma}_{j+1})$ has the form of $\mathcal{A}$ in Lemma \ref{reduc}, where $ {\mathcal{A}_{{\bf n},j}}/({\widehat{\sigma}_{j+1}w_{{\bf n},j}}) \in \mathcal{H}(\mathbb{C} \setminus \Delta_{j+1})$, and
\[\frac{\mathcal{A}_{{\bf n},j}}{\widehat{\sigma}_{j+1}w_{{\bf n},j}} \in \mathcal{O}\left(\frac{1}{z^{|{\bf n}| -M- \overline{n}_1 - \cdots-\overline{n}_{j} -1}}\right), \qquad z \to \infty.
\]
From  Lemma \ref{reduc}, we obtain that for $\nu = 0,\ldots,|{\bf n}| -M- \overline{n}_1 - \cdots-\overline{n}_{j} -3$
\[ 0 = \int_{\Delta_{j+1}} x^{\nu} \left( p_{{\bf n},j}(x) - \sum_{k=j+2}^m p_{{\bf n},k}  \langle s_{j+2,k}, \sigma_{j+1}  \widehat{\rangle}(x) \right)\frac{d\tau_{j+1}(x)}{w_{{\bf n},j}(x)},
\]
which implies that the function in parenthesis under the integral sign has at least $|{\bf n}|-M - \overline{n}_1 - \cdots-\overline{n}_{j} -2$ sign changes on $\Delta_{j+1}$. In turn, it follows that there exists  a polynomial $\widetilde{w}_{{\bf n},j+1}, \deg \widetilde{w}_{{\bf n},j+1} = |{\bf n}| -M- \overline{n}_1 - \cdots-\overline{n}_{j} -2$, whose zeros are simple and lie on $\Delta_{j+1}$ such that
\[ \frac{p_{{\bf n},j}  - \sum_{k=j+2}^m p_{{\bf n},k}  \langle s_{j+2,k}, \sigma_{j+1}  \widehat{\rangle} }{\widetilde{w}_{{\bf n},j+1}} \in \mathcal{H}(\mathbb{C} \setminus \Delta_{j+2})
\]
and
\[ \frac{p_{{\bf n},j}  - \sum_{k=j+2}^m p_{{\bf n},k}  \langle s_{j+2,k}, \sigma_{j+1}  \widehat{\rangle} }{\widetilde{w}_{{\bf n},j+1}} = \mathcal{O}\left(\frac{1}{z^{|{\bf n}|-M - \overline{n}_1 - \cdots-\overline{n}_{j-1}-2\overline{n}_{j} -1}}\right), \qquad z \to \infty.
\]
Notice that $p_{{\bf n},j+1}$ has been eliminated and that
\[ \langle s_{j+2,k}, \sigma_{j+1}   {\rangle} = \langle \langle \sigma_{j+2},\sigma_{j+1}\rangle, \sigma_{j+3}, \ldots,\sigma_k \rangle, \qquad k = j+3,\ldots,m.
\]

Now we must do away with $p_{{\bf n},j+2}$ in  $p_{{\bf n},j}  - \sum_{k=j+2}^m p_{{\bf n},k}  \langle s_{j+2,k}, \sigma_{j+1}  \widehat{\rangle}$ (in case that $j+2 < m$). To this end, we consider the ratio
\[ \frac{p_{{\bf n},j}  - \sum_{k=j+2}^m p_{{\bf n},k}  \langle s_{j+2,k}, \sigma_{j+1}  \widehat{\rangle} }{\langle \sigma_{j+2},\sigma_{j+1}\widehat{\rangle}}
\]
and repeat the arguments employed above with $\mathcal{A}_{{\bf n},j}$. After $m-j-2$ reductions obtained applying consecutively Lemma \ref{reduc}, we find that there exists a polynomial which we denote $w_{{\bf n},j}^*, \deg w_{{\bf n},j}^* = |{\bf n}| -M- \overline{n}_1 - \cdots-\overline{n}_{j-1}-(m-j-1)\overline{n}_{j} -2$ whose zeros are simple and lie on $\Delta_{m-1}$ such that
\[ \frac{p_{{\bf n},j} - (-1)^{m-j} p_{{\bf n},m} \langle \sigma_{m},\ldots, \sigma_{j+1} \widehat{\rangle}}{w_{{\bf n},j}^*} \in \mathcal{H}(\mathbb{C} \setminus \Delta_m)
\]
and
\[ \frac{p_{{\bf n},j}- (-1)^{m-j} p_{{\bf n},m} \langle \sigma_{m},\ldots, \sigma_{j+1} \widehat{\rangle}}{w_{{\bf n},j}^*}  = \mathcal{O}\left(\frac{1}{z^{{|{\bf n}| - \overline{n}_1 - \cdots-\overline{n}_{j-1}-(m-j)\overline{n}_{j}-1}}}\right), \qquad z\to \infty.
\]
Dividing by $(-1)^{m-j}  \langle \sigma_{m},\ldots, \sigma_{j+1} \widehat{\rangle},$ from here it also follows that
\[ \frac{p_{{\bf n},j}(-1)^{m-j}\langle \sigma_{m},\ldots, \sigma_{j+1} \widehat{\rangle}^{-1} -  p_{{\bf n},m} }{w_{{\bf n},j}^*} \in \mathcal{H}(\mathbb{C} \setminus \Delta_m)
\]
and
\[ \frac{p_{{\bf n},j}(-1)^{m-j}\langle \sigma_{m},\ldots, \sigma_{j+1} \widehat{\rangle}^{-1}- p_{{\bf n},m} }{w_{{\bf n},j}^*}  = \mathcal{O}\left(\frac{1}{z^{{|{\bf n}| - M-\overline{n}_1 - \cdots-\overline{n}_{j-1}-(m-j)\overline{n}_{j}-2}}}\right), \quad z\to \infty.
\]
On account of (\ref{cond1}), these relations imply that $(p_{{\bf n},j}/p_{{\bf n},m}), {\bf n} \in \Lambda,$ is a sequence of incomplete diagonal multi-point Pad\'e approximants of
$(-1)^{m-j}\langle \sigma_{m},\ldots, \sigma_{j+1} \widehat{\rangle}\phantom{ert}$ and $\phantom{e} (p_{{\bf n},m}/p_{{\bf n},j}),  {\bf n} \in \Lambda,$ is a sequence of incomplete diagonal multi-point Pad\'e approximants of $(-1)^{m-j}\langle \sigma_{m},\ldots, \sigma_{j+1} \widehat{\rangle}^{-1}$. Since $\langle \sigma_{m},\ldots, \sigma_{j+1} \widehat{\rangle}^{-1} = \widehat{\tau}_{m,j+1} + \ell_{m,j+1}, \deg \ell_{m,j+1} = 1,$ and the measures $s_{m,j+1}$ and $\tau_{m,j+1}$ satisfy Carleman's condition by Lemma \ref{BusLop} we obtain (\ref{convHaus}).
\end{proof}

\section{ Proof of  Theorem \ref{th1}}

\begin{proof}
The type I  Hermite-Pad\'e  polynomials  $\left(a_{{\bf n},0},\ldots, a_{{\bf n},m}\right)$ with respect to ${\bf f}$ satisfy
\item[i)] $\deg a_{{\bf n},j} \leq n_j -1, j=1,\ldots,m,$
\item[ii)] $a_{{\bf n},0} + \sum_{j= 1}^m a_{{\bf n},j} (\widehat{s}_{1,j}+r_j) = \mathcal{O}(1/z^{|{\bf n}|}) \in \mathcal{H}(\mathbb{C} \setminus \Delta_1)^{\prime}$

Denote by $T:=\prod_{j=1}^{m}t_j$, $D:= \deg T$, multiplying \textit{ii)} by $T$
\begin{equation}\label{imcop}
p_{{\bf n},0} + \sum_{j= 1}^m Ta_{{\bf n},j} \widehat{s}_{1,j} = \mathcal{O}(1/z^{|{\bf n}| -D}) \in \mathcal{H}(\mathbb{C} \setminus \Delta_1)
\end{equation}
where
\begin{equation}\label{p0}
p_{{\bf n},0}=Ta_{{\bf n},0}+\sum_{j= 1}^m Ta_{{\bf n},j}r_j
\end{equation}
Therefore $\left(p_{{\bf n},0},Ta_{{\bf n},1},\ldots, Ta_{{\bf n},m}\right)$ is  an incomplete type I  Hermite-Pad\'e  approximant of the Nikishin system $(s_{1,1},\ldots,s_{1,m})$ with respect to the multi-index $(n_1+D,\ldots, n_m+D)$. From Lemma~\ref{CCTI} it follows that for $j=1,\ldots m$
\begin{equation} \label{convHausj}
h-\lim_{{\bf n}\in \Lambda}\frac{a_{{\bf n}, j}}{a_{{\bf n},m}} = (-1)^{m-j}\widehat{s}_{m,j+1}, \quad h-\lim_{n\in \Lambda}\frac{a_{{\bf n}, m}}{a_{{\bf n},j}} = \frac{(-1)^{m-j}}{\widehat{s}_{m,j+1} },
\end{equation}
and
\begin{equation} \label{convHaus0}
h-\lim_{{\bf n}\in \Lambda}\frac{p_{{\bf n}, 0}}{Ta_{{\bf n},m}} = (-1)^{m}\widehat{s}_{m,1}, \quad h-\lim_{n\in \Lambda}\frac{Ta_{{\bf n}, m}}{p_{{\bf n},0}} = \frac{(-1)^{m}}{\widehat{s}_{m,1} }.
\end{equation}
Substituing (\ref{p0}) and (\ref{convHausj}) into (\ref{convHaus0}) we get
\begin{equation}\label{Ca0}
h-\lim_{{\bf n}\in \Lambda}\frac{a_{{\bf n}, 0}}{a_{{\bf n},m}} = (-1)^{m}\widehat{s}_{m,1}-\sum_{j=1}^{m}(-1)^{m-j}\widehat{s}_{m,j+1}r_j.
\end{equation}

 Due to \textit{ii)}  and (\ref{eq:4}), it follows that
\[ \int x^{\nu}  \mathcal{A}_{{\bf n},1}(x) T(x) d\sigma_1(x) = 0, \qquad \nu = 0,\ldots, |{\bf n}| -  D -2.
\]
where $\mathcal{A}_{{\bf n},1}=a_{{\bf n},1} + \sum_{k= 2}^m a_{{\bf n},k} \widehat{s}_{2,k}$
This implies that $\mathcal{A}_{{\bf n},1}$ has at least $|{\bf n}|  - D -1$ sign changes in $\stackrel{\circ}{\Delta}_1$. \par

Let $N_{\bf n} = \max\{n_1,n_2-1,\ldots,n_m-1\}$ and
 $w_{{\bf n},1}$ be the monic polynomial whose zeros are the points where $\mathcal{A}_{{\bf n},1}$ changes sign in $\stackrel{\circ}{\Delta}_1$. Then
 \begin{equation}\label{ert}
 \frac{\mathcal{A}_{{\bf n},1}(z)}{w_{{\bf n},1}(z)} = \mathcal{O}\left(1/z^{|{\bf n}|   - N_{\bf n}- D}\right) \in \mathcal{H}(\mathbb{C} \setminus \Delta_2).
\end{equation}

Let $\overline{\jmath}$ be the last component of $(n_1,\ldots,n_m)$ such that $n_{\overline{\jmath}} = \min_{j=1,\ldots,m} (n_j)$.  Let us prove that  $a_{{\bf n},\overline{\jmath}}$ has at most  $D $  zeros on $\mathbb{C}\setminus \Delta_{m} $.

From  \cite[Theorem 1.3]{FL4} (see also \cite[Theorem 3.2]{FL4II}), we know that  there exists a permutation $\lambda$ of $(1,\ldots,m)$ which reorders the components of $(n_1,n_2,\ldots,n_m)$ decreasingly, $n_{\lambda(1)} \geq \cdots \geq n_{\lambda(m)},$ and an associated Nikishin system $(r_{2,2},\ldots,r_{2,m}) = {\mathcal{N}}(\rho_{2},\ldots,\rho_m)$ such that
\[  \mathcal{A}_{{\bf n},1} = (q_{{\bf n},1} + \sum_{k=1}^m q_{{\bf n},k} \widehat{r}_{2,k})\widehat{s}_{2,\lambda(1)}, \quad \deg q_{{\bf n},k} \leq n_{\lambda(k)} -1, \quad k=1,\ldots,m,
\]
where  $\widehat{s}_{2,\lambda(1)} \equiv 1$ when $\lambda(1) = 1$. The permutation may be taken so that for all $1 \leq j <k \leq n$ with $n_j = n_k$ then also $\lambda(j) < \lambda(k)$. In this case,  see formulas (31) in the proof of \cite[Lemma 2.3]{FL4}, it follows that $q_{{\bf n},m}$ is either $a_{{\bf n},\overline{\jmath}}$ or $-a_{{\bf n},\overline{\jmath}}$.

Set
\[\mathcal{Q}_{{\bf n},j} := q_{{\bf n},j} + \sum_{k=j+1}^m q_{{\bf n},k} \widehat{r}_{1,k}, \quad j=1,\ldots,m-1, \quad \mathcal{Q}_{{\bf n},m} := q_{{\bf n},m}.\]
From (\ref{ert}), using again (\ref{eq:4}) we get
\[ \int x^{\nu}  \mathcal{Q}_{{\bf n},2}(x)  \frac{d\rho_2(x)}{w_{{\bf n},1}(x)} = 0, \quad \nu = 0,\ldots, |{\bf n}|  - n_{\lambda(1)}- n_{\lambda(2)} - D -2,
\]
which implies that $\mathcal{Q}_{{\bf n},2}$ has at least $|{\bf n}|  - n_{\lambda(1)}- n_{\lambda(2)} - D -1$ sign changes on $\stackrel{\circ}{\Delta}_2$. Repeating the arguments $m-1$ times, it follows that $\mathcal{Q}_{{\bf n},m} = q_{{\bf n},m}$ has at least $n_{\overline{\jmath}} - D -1$ sign changes on $\stackrel{\circ}{\Delta}_m$ which implies that  $a_{{\bf n},\overline{\jmath}}$ has at most $D$ zeros on   $\mathbb{C}\setminus \Delta_m$ because $q_{{\bf n},m} = \pm a_{{\bf n},\overline{\jmath}}$.

The index $\overline{\jmath}$ as defined above may depend on  ${\bf n} \in \Lambda$. Given $\overline{\jmath} \in \{1,\ldots,m\}$, let $\Lambda(\overline{\jmath})$ denote the set of all ${\bf n} \in \Lambda$ such that $\overline{\jmath}$ is the last component of $(n_1,\ldots,n_m)$ satisfying $n_{\overline{\jmath}} = \min_{j=1,\ldots,m} (n_j)$. Fix $\overline{\jmath}$ and suppose that $\Lambda(\overline{\jmath})$ contains infinitely many multi-indices.\par
Should $\overline{\jmath} = m$, then $a_{{\bf n},m}$ has $n_m -D -1$ zeros in $\stackrel{\circ}{\Delta}_m$. According to (\ref{Ca0}) the sequence of rational fractions $\left\{\frac{a_{{\bf n}, 0}}{a_{{\bf n},m}}\right\}, {\bf n}\in \Lambda(m)$, which have at most $D$ poles in $\mathbb{C}\setminus\Delta_m$ converges in Hausdorff content to a function which has exactly $D$ poles in $\mathbb{C}\setminus\Delta_m$ (recall that if $j\neq k$ the
poles of $r_j$ and $r_k$ distinct). Using Gonchar's lemma in \cite{gon} it follows that for all  ${\bf n}\in \Lambda(m)$ with $\left|{\bf n}\right|$ sufficiently large we have that $\deg a_{{\bf n,m}}=n_m-1$ having $a_{{\bf n,m}}$ exactly $n_m-D-1$ zeros on $\Delta_m$ and the rest of its zeros converge to the poles of the $r_j, j=1,\ldots, m$ according to their order. This fact together with (\ref{convHausj}) and Gonchar's lemma again imply (\ref{fund1}) and (\ref{fund0}).\par

Let us prove that for every compact $K \subset \mathbb{C} \setminus \Delta_m$ there exists $N = N(K)$ such that for ${\bf n} \in \Lambda, |{\bf n}| > N,$ the polynomial $a_{{\bf n},m}$ has at most $D$ zeros on $K$. To the contrary, suppose there exists $K \subset \mathbb{C} \setminus \Delta_m$ and an infinite sequence of multi-indices $\Lambda^{\prime} \subset \Lambda$ such that for every ${\bf n} \in \Lambda^{\prime}$ $a_{{\bf n},m}$ has at least $D+1$ zeros on $K$. Since $\cup_{\overline{j} =1}^m \Lambda(\overline{\j}) = \Lambda$ there exists $\overline{\j}$ such that $\Lambda(\overline{\j}) \cap \Lambda^{\prime}$ contains infinitely many sub-indices. Because of what was proved above $\overline{\j} \in \{1,\ldots,m-1\}$.

Fix $R$ sufficiently large so that $K$ is contained in the disk $D(0,R) = \{z:|z| < R\}$. The polynomial $a_{{\bf n},\overline{\j}}, {\bf n} \in \Lambda(\overline{\j}) \cap \Lambda^{\prime},$ has at most $D$ zeros in $D(0,R) \setminus \Delta_m$. Let $q_{\bf n}$ be the monic polynomial of degree $\leq D$ whose zeros are the points in $D(0,R) \setminus \Delta_m$ where $a_{{\bf n},\overline{\j}}$ equals zero. Since the zeros of the polynomials $q_{\bf n}$ are uniformly bounded, there exists an infinite sequence of indices $\widetilde{\Lambda} \subset \Lambda(\overline{\j}) \cap \Lambda^{\prime}$ such that $\lim_{{\bf n} \in \widetilde{\Lambda}}q_{\bf n} = q, \deg q \leq D,$ uniformly of $\overline{D(0,R)}$. Since the number of zeros of $q$ is at most $D$ and the distance between $K$ and $\{z:|z|= R\} \cup \Delta_m$ is positive, we can find a compact set $\widetilde{K} \subset D(0,R) \setminus \Delta_m$ which contains $K$ in its interior, whose boundary $\partial \widetilde{K}$  consists of a finite number of non-intersecting smooth Jordan curves, and $\partial \widetilde{K}$ contains non of the zeros of $q$. Taking a subsequence of $\widetilde{\Lambda}$ if necessary we can assume that $\partial \widetilde{K}$ contains no zero of $q_{\bf n}, {\bf n} \in \widetilde{\Lambda}$.

The functions ${q_{\bf n} a_{{\bf n},m}}/{a_{{\bf n},\overline{\j}}}, {\bf n} \in \widetilde{\Lambda},$ are holomorphic in $D(0,R) \setminus \Delta_m$. On account of the second part in (\ref{convHausj}) and Gonchar's lemma, we obtain that
\[ \lim_{{\bf n} \in \widetilde{\Lambda}} \frac{q_{\bf n} a_{{\bf n},m}}{a_{{\bf n},\overline{\j}}} =  (-1)^{m - \overline{\j}} \frac{q}{\widehat{s}_{m,\overline{\j} +1}}
\]
uniformly on compact subsets of $D(0,R) \setminus \Delta_m$; in particular on $\partial \widetilde{K}$. The limit is never equal to zero on $\partial \widetilde{K}$. Let $\partial \widetilde{K}$ be oriented positively. Then
\[ \lim_{{\bf n} \in \widetilde{\Lambda}}  \frac{1}{2\pi i} \int_{\partial \widetilde{K}} \frac{({q_{\bf n} a_{{\bf n},m}}/{a_{{\bf n},\overline{\j}}})^{\prime}(z)dz}{ ({q_{\bf n} a_{{\bf n},m}}/{a_{{\bf n},\overline{\j}}})(z)} = \frac{1}{2\pi i}\int_{\partial \widetilde{K}}
\frac{({q}/{\widehat{s}_{m,\overline{\j} +1}})^{\prime}(z) dz}{ ({q}/{\widehat{s}_{m,\overline{\j} +1}})(z)}.
\]
According to the argument principle, the right hand equals the number of zeros of $q$ surrounded by $\partial \widetilde{K}$ which is at most $D$. Therefore, for all ${\bf n} \in \widetilde{\Lambda}$ such that $|{\bf n}|$ is sufficiently large the left hand must be equal to an integer $\leq D$. However, for each ${\bf n}$ the integral on the left represents the number of zeros of $a_{{\bf n},m}$ surrounded by $\partial \widetilde{K}$ which is at least equal to $D+1$. This contradiction proofs the statement.

Combining the statement just proved with (\ref{Ca0}) and Gonchar's lemma \eqref{fund0} readily follows. Additionally, Gonchar's lemma implies  we get that each pole
of $r_j, j=1,\ldots,m$ attracts as many zeros of $a_{{\bf n},m}$ as its order. That is, if $\zeta$ is a zero of $T$ of multiplicity $\kappa$ then for each $\varepsilon >0$ sufficiently small there exists an $N$ such that for all ${\bf n} \in \Lambda, |{\bf n}| > N,$  $a_{{\bf n},m}$ has at least $\kappa$ zeros in $\{z:|z-\zeta| < \varepsilon\}$. Since the total number of such zeros counting multiplicities is $D$, we conclude that the only accumulation points of the zeros of the $a_{{\bf n},m}$ are either the poles of the $r_j$ (each of which attracts exactly as many zeros of the $a_{{\bf n},m}$ as its order) or points in $\Delta_m \cup \{\infty\}$. Using again the argument principle this is true for all $j=1,\ldots,m$ and the rest of the zeros of $a_{{\bf n},j}$ acumulate on $\Delta_m \cup \{\infty\}$. This together with  (\ref{convHausj}) and Gonchar's lemma imply  (\ref{fund1}). Finally,  \eqref{fund1} and the argument principle imply that also for each $j=1,\ldots,m-1$ each zero of $T$ attracts exactly as many of $a_{{\bf n},j}$ as its multiplicity and the rest of the zeros of $a_{{\bf n},j}$ accumulate on $\Delta_m \cup \{\infty\}$.
\end{proof}

\begin{remark}
According to formula (17) in \cite[Lemma 2.9]{FL4II}, for each $j=0,\ldots, m-1$
\begin{equation}\label{chile}
 0 \equiv (-1)^{m-j }\widehat{s}_{m,j+1}  + \sum_{k=j+1}^{m-1}(-1)^{m-k}   \widehat{s}_{m,k+1} \widehat{s}_{j+1,k} +
  \widehat{s}_{j+1, m}, \quad z \in \mathbb{C} \setminus (\Delta_{j+1} \cup \Delta_m).
\end{equation}
Combining (\ref{chile}),(\ref{fund1}), and (\ref{fund0}), we obtain
\begin{equation*}
  \lim_{{\bf n} \in \Lambda} \left( \frac{a_{{\bf n},j} + \sum_{k= j+1}^m a_{{\bf n},k} \widehat{s}_{j+1,k}}{a_{{\bf n},m}}\right) = 0, \qquad j=1,\ldots,m-1,
\end{equation*}
and
\begin{equation*}
  \lim_{{\bf n} \in \Lambda} \left( \frac{a_{{\bf n},0} + \sum_{k= 1}^m a_{{\bf n},k} (\widehat{s}_{1,k}+r_k)}{a_{{\bf n},m}} \right)= 0,
\end{equation*}
uniformly on each compact subset $K$ of $(\mathbb{C}\setminus \Delta_{m})^{\prime}$.
\end{remark}

\begin{remark} The thesis of Theorem \ref{th1} remains valid if in place of  \eqref{cond1} we require that
\begin{equation} \label{cond2} n_j  =  \frac{|{\bf n}|}{m}  + o(|{\bf n}|),\qquad   |{\bf n}| \to \infty ,  \qquad j=1,\ldots,m.
\end{equation}
To prove this we need an improved version of Lemma~\ref{CCTI} in which  the parameter $M$ in b')  depends on $n$ but $M(n) = o(n), n \to \infty$.
\end{remark}

\begin{remark} If either $\Delta_m$ or $\Delta_{m-1}$ is a compact set and $\Delta_{m-1} \cap \Delta_{m} = \emptyset$, it not difficult to show that convergence takes place in \eqref{fund1} and \eqref{fund0} with geometric rate. More precisely, for $j=1,\ldots,m-1,$ and $\mathcal{K} \subset \mathbb{C} \setminus \Delta_m$, we have
\begin{equation} \label{asin1} \limsup_{{\bf n} \in \Lambda} \left\|\frac{a_{{\bf n}, j}}{a_{{\bf n},m}} - (-1)^{m-j}\widehat{s}_{m,j+1} \right\|_{\mathcal{K}}^{1/|{\bf n}|} = \delta_j < 1.
\end{equation}
and
\begin{equation} \label{asin2} \limsup_{{\bf n} \in \Lambda} \left\|\frac{a_{{\bf n}, 0}}{a_{{\bf n},m}} -
((-1)^{m}\widehat{s}_{m,1}-\sum_{j=1}^{m-1}(-1)^{m-j}\widehat{s}_{m,j+1}r_j+r_m) \right\|_{\mathcal{K}}^{1/|{\bf n}|} = \delta_0 < 1.
\end{equation}
The proof  is similar to that of \cite[Corollary 1]{FL2}. It is based on the fact that the number of interpolation points on $\Delta_{m-1}$ is $\mathcal{O}(|{\bf n}|), |{\bf n}| \to \infty,$ and that the distance from $\Delta_m$ to $\Delta_{m-1}$ is positive. Relations \eqref{asin1} and \eqref{asin2} are also valid if \eqref{cond1} is replaced with \eqref{cond2}.
Asymptotically, \eqref{cond2} still means that the components of $\bf n$ are equally distributed. One can relax \eqref{cond2} requiring, for example, that the generating measures are regular in the sense of \cite[Chapter 3]{stto} in which case the exact asymptotics of \eqref{asin1} and \eqref{asin2} can be given (see, for example, \cite{Nik0}, \cite[Chapter 5, Section 7]{NS}, and \cite[Theorem 5.1, Corollary 5.3]{FLLS}).
\end{remark}

\begin{remark}  The previous ideas can be applied to other approximation schemes.\par
 Let $(s_{1,1},\ldots,s_{1,m}) = \mathcal{N}(\sigma_1,\ldots,\sigma_m), {\bf n} = (n_1,\ldots,n_m) \in \mathbb{Z}_+^{m} \setminus \{{\bf 0}\},$ and $w_{\bf n},$ $ \deg w_{\bf n} \leq |{\bf n}| + \max(n_j)-2,$ a polynomial with real coefficients whose zeros lie in $\mathbb{C} \setminus\Delta_1$, be given. We say that $\left(a_{{\bf n},0}, \ldots, a_{{\bf n},m}\right)$ is a  type I multi-point Hermite-Pad\'e approximation of $(\widehat{s}_{1,1}+r_1,\ldots,\widehat{s}_{1,m}+r_m)$ with respect to $w_{\bf n}$ if:
\begin{itemize}
\item[i)] $\deg a_{{\bf n},j} \leq n_j -1, j=1,\ldots,m, \quad \deg a_{{\bf n},0} \leq n_0 -1, \quad n_0 := \max_{j=1,\ldots,m} (n_j) -1,\quad$ not all identically equal to $0$ ($n_j = 0$ implies that $a_{{\bf n},j} \equiv 0$),
\item[ii)] $\mathcal{A}_{{\bf n},0}/w_{\bf n} \in \mathcal{H}(\mathbb{C} \setminus \Delta_1)^{\prime}\quad$ and  $\quad \mathcal{A}_{{\bf n},0}(z)/w_{\bf n}(z)= \mathcal{O}(1/z^{|{\bf n}|}), \quad z\to \infty.$
\end{itemize}
Then, a  result analogous to Theorem~\ref{th1} is true.
\end{remark}

\end{document}